\documentclass[12pt,leqno]{article}
\usepackage{amssymb}
\usepackage[mathscr]{eucal}
\usepackage{amsmath,amssymb,latexsym,theorem,bbm}
\usepackage{color,url}
\usepackage{enumitem}

\newcommand{\SC}{\scriptstyle}

\newcommand{\CC}{\mathsf{C}}
\newcommand{\DD}{\mathsf{D}}

\newcommand{\NN}{\mathbb{N}}
\newcommand{\QQ}{\mathbb{Q}}
\newcommand{\RR}{\mathbb{R}}

\newcommand{\ZZ}{\mathbb{Z}}

\newcommand{\cA}{{\mathcal A}}
\newcommand{\cB}{{\mathcal B}}

\newcommand{\cD}{{\mathcal D}}

\newcommand{\cF}{{\mathcal F}}

\newcommand{\cU}{{\mathcal U}}

\newcommand{\bcU}{\boldsymbol{\cU}}

\newcommand{\cY}{{\mathcal Y}}

\newcommand{\bcY}{\boldsymbol{\cY}}

\newcommand{\dd}{\mathrm{d}}
\newcommand{\ee}{\mathrm{e}}

\newcommand{\slu}{{\SC\mathrm{lu}}}

\newcommand{\EE}{\operatorname{\mathbb{E}}}
\newcommand{\PP}{\operatorname{\mathbb{P}}}

\newcommand{\var}{\operatorname{Var}}

\renewcommand{\mid}{\,|\,}

\renewcommand{\leq}{\leqslant}
\renewcommand{\geq}{\geqslant}

\newcommand{\stoch}{\stackrel{\PP}{\longrightarrow}}
\newcommand{\distr}{\stackrel{\cD}{\longrightarrow}}
\newcommand{\distrf}{\stackrel{\cD_f}{\longrightarrow}}
\newcommand{\distre}{\stackrel{\cD}{=}}

\newcommand{\qmean}{\stackrel{L_2}{\longrightarrow}}

\newcommand{\lu}{\stackrel{\slu}{\longrightarrow}}
\newcommand{\Jto}{\stackrel{J_1}{\longrightarrow}}
\newcommand{\Jtoo}[1]{\stackrel{J_1(#1)}{\longrightarrow}}

\newcommand{\ase}{\stackrel{{\mathrm{a.s.}}}{=}}

\newcommand{\bbone}{\mathbbm{1}}
\newcommand{\ns}{{\lfloor ns\rfloor}}
\newcommand{\nt}{{\lfloor nt\rfloor}}

\newcommand{\nT}{{\lfloor nT\rfloor}}

\newcommand{\proofend}{\hfill\mbox{$\Box$}}

\numberwithin{equation}{section}

\theoremstyle{change} \theorembodyfont{\em}
\newtheorem{Lem}{Lemma.}[section]
\newtheorem{Thm}[Lem]{Theorem.}

\newtheorem{Cor}[Lem]{Corollary.}
\newtheorem{Def}[Lem]{Definition.}

\theorembodyfont{\rm}
\newtheorem{Rem}[Lem]{Remark.}
\newtheorem{Ex}[Lem]{Example.}

\begin{document}

\begin{center}
{\bfseries\Large
 Functional limit theorems for\\[-2mm] Galton--Watson processes with inhomogeneous\\[1mm]
 immigration towards a deterministic limit function}

\vspace*{3mm}

 {\sc\large
  M\'aty\'as $\text{Barczy}^{*}$ \text{and}
  \ D\'aniel $\text{Bezd\'any}^{**,\diamond}$ }

\end{center}

\vskip0.2cm

\noindent
 * HUN-REN--SZTE Analysis and Applications Research Group,
   Bolyai Institute, University of Szeged,
   Aradi v\'ertan\'uk tere 1, H--6720 Szeged, Hungary.

\noindent
 ** Bolyai Institute, University of Szeged,
    Aradi v\'ertan\'uk tere 1, H--6720 Szeged, Hungary.

\noindent E-mails: barczy@math.u-szeged.hu (M. Barczy),
                   bezdany@server.math.u-szeged.hu (D. Bezd\'any).

\noindent $\diamond$ Corresponding author.

\vskip0.2cm

\renewcommand{\thefootnote}{}
\footnote{\textit{2020 Mathematics Subject Classifications\/}: 60J80, 60F17. }
\footnote{\textit{Key words and phrases\/}:
  Galton--Watson process with inhomogeneous immigration, asymptotic behavior, functional limit theorem, regularly varying function.}

\vspace*{0.1cm}

\begin{abstract}
We study the asymptotic behavior of a sequence of Galton--Watson processes with inhomogeneous immigration
 when the limit of the means of the offspring distributions is equal to $1$.
Under a growth condition on the variances of the offspring distributions,
 assuming the locally uniform convergence of the expectation functions of
 properly scaled immigration processes towards a c\`adl\`ag function,
  and the locally uniform convergence of the mean absolute deviation functions of the scaled immigration processes in question towards zero,
 we establish a functional limit theorem for the sequence of Galton--Watson processes with inhomogeneous immigration in question
 towards a deterministic limit process.
We also provide some applications, among others, we show that a recent result of Rahimov and Sharipov (2024)
 on a functional limit theorem for a sequence of nearly critical Galton--Watson processes with dependent immigration
 can be recovered using our result.
\end{abstract}

%\tableofcontents

\section{Introduction}\label{Intro}

Functional limit theorems lie at the heart of studying stochastic processes.
In this paper, for Galton--Watson processes with inhomogeneous immigration,
 we derive such a result with a deterministic limit process.
Galton–Watson processes with or without immigration can describe the evolution of populations,
 namely, individuals reproduce independently according to a given offspring law and,
 in case of immigration, the arrival of immigrants in each generation occurs independently 
 of the population’s evolution according to a given immigration law.
In this paper, we consider a modification of Galton-Watson processes with immigration, namely,
 the case when the number of immigrating individuals in different generations are not necessarily independent or identically distributed
 (i.e., the immigration law can depend on the generation). 
In mathematical terms, with $\ZZ_+$ and $\NN$ denoting the set of non-negative and positive integers, respectively,
 let $(Y_k)_{k\in\ZZ_+}$ be a $\ZZ_+$-valued stochastic process,
and let $(X_k)_{k\in\ZZ_+}$ be a stochastic process given by
\begin{equation}\label{X_def}
X_{k}=\sum_{j=1}^{X_{k-1}}\xi_{k,j}+Y_{k-1}, \qquad
	\text{$k\in\NN$,}
\end{equation}
 with $X_0=0$ and using the convention $\sum_{j=1}^0:=0$,
 where $\{\xi_{k,j}: k,j\in\NN\}$ are independent,
 identically distributed $\ZZ_+$-valued random variables independent of $\{Y_k: k\in\ZZ_+\}$.
We call $(X_k)_{k\in\ZZ_+}$ a Galton--Watson process with inhomogeneous immigration (GWII process),
 and $(Y_k)_{k\in\ZZ_+}$ is called the immigration process of $(X_k)_{k\in\ZZ_+}$.
If, in addition, $Y_k$, $k\in\ZZ_+$, are independent and identically distributed,
 then we say that $(X_k)_{k\in\ZZ_+}$
 is a Galton--Watson process with immigration (GWI process).
Intuitively, we can interpret $X_0$ as the initial size of a population,
 and, for each $k\in\NN$, $X_k$ as the size of the $k$-th generation of a population,
 $\xi_{k,j}$ as the number of offspring of the $j$-th individual in the $(k-1)$-th generation,
 and $Y_{k-1}$ as the number of immigrants joining the $k$-th generation.

 Studying the asymptotic behavior of Galton–Watson processes with and without immigration has a long and vast history. 
 The nature of these limit theorems highly depends on whether the offspring mean of the branching process 
 in question is less than, equal to, or greater than one (subcritical, critical, or supercritical cases).
In this paper, we investigate the asymptotic behavior of a sequence of Galton–Watson processes
 with inhomogeneous immigration in the case where the limiting mean of the offspring distributions
 is equal to 1 (roughly speaking corresponding to the critical case).
For an overview on the earlier results on the asymptotic behavior of Galton–Watson processes with inhomogeneous immigration, 
 including some applications as well, see the introduction of our recent paper Barczy and Bezd\'any \cite{BarBez2}.
Here we only mention a book and a recent survey of Rahimov \cite{Rah3} and Rahimov \cite{Rah2}, respectively. 

The paper is organized as follows.
In Section \ref{Section_multi-type_GWI}, first we introduce some notations and hypotheses that are used throughout the paper, 
 and then we recall some results of Barczy and Bezd\'any \cite{BarBez2} on functional limit theorems 
 for some Galton–Watson processes with inhomogeneous immigration, see Theorems \ref{main_2} and \ref{exp_thm}.
In Section \ref{Section_conv_results}, first we derive a set of sufficient conditions 
 under which a sequence of properly scaled Galton–Watson processes with inhomogeneous immigration 
 converges in distribution to a deterministic limit process, see Theorem \ref{main_3}.
Then we present three remarks in which we highlight some connections among the conditions of Theorem \ref{main_3},
 see Remarks \ref{crit_rem}, \ref{Rem_d}, and \ref{Y_no_conv}.
We also provide several applications of Theorem \ref{main_3}.
In Example \ref{Ex_1}, we apply Theorem \ref{main_3} to a sequence of Poisson INteger-valued AutoRegressive processes of order $1$ 
 ($\mathrm{PINAR(1)}$ processes).
The application given in Example \ref{Ex_2} is motivated by Rahimov and Sharipov \cite[Example 1 and Corollary 1]{RahSha}, 
 where each immigration process is given by some (not necessarily independent) 
 Poisson distributed random variables such that their means are described in terms of
 a function regularly varying  at infinity with a positive index.
In Remark \ref{IspPapZui_rem}, we use Theorem \ref{main_3} to recover 
 Theorem 2.1 in Isp\'any et al.\ \cite{IspPapZui} in the case $b^2=0$ (using their notation).
We close Section \ref{Section_conv_results} by recalling Theorem 1 in Rahimov and Sharipov \cite{RahSha}
 in case of regularly varying functions with positive indices in their condition \textup{(\textbf{C1})},
 see Theorem \ref{RahSha_0}.
The reason for doing so is that, in Section \ref{RahSha_proof},
 we show that Theorem \ref{main_3} can be used to recover
 the above mentioned result of Rahimov and Sharipov \cite{RahSha}.
In Section \ref{Sec_preliminary}, we present some auxiliary notations 
 and results that are used in the proofs.
In particular, we recall some formulae for the conditional expectation of
 $X_k$ given $Y_0,X_0,\dots,Y_{k-1},X_{k-1}$, where $k\in\NN$.
Section \ref{Sec_Proof_Thm_main3} is devoted to the proof of Theorem \ref{main_3}.
In Section \ref{RahSha_proof}, as an application of Theorem \ref{main_3}, 
 we derive Theorem 1 in Rahimov and Sharipov \cite{RahSha} (see also Theorem \ref{RahSha_0}) by giving an alternative proof.
We close the paper with two appendices. 
Appendix \ref{GWII_moments_CMT} contains some results on locally uniform convergence,
 convergence in the Skorokhod $J_1$ topology, and a version of the continuous mapping theorem.
In Appendix \ref{App_reg}, we present some simple auxiliary results on regularly varying functions.

Finally, we emphasize that the method of the proof of our Theorem \ref{main_3} is 
 different from the one used in the proof of Theorem 1 in Rahimov and Sharipov \cite{RahSha}.
The proof of Theorem 1 in Rahimov and Sharipov \cite{RahSha} is based on a limit theorem which gives sufficient conditions under which the
 partial sum processes of a triangular array of dependent random variables converge in distribution to a
 deterministic continuous function, see, e.g., Rahimov and Sharipov \cite[Theorem A]{RahSha}.
More precisely, given a sequence $(X^{(n)}_k)_{k\in\ZZ_+}$, $n\in\NN$, of GWII processes 
 (with offspring and immigration random variables $\{\xi^{(n)}_{k,j}: k,j\in\NN\}$
  and $\{ Y^{(n)}_k : k\in\ZZ_+\}$, $n\in\NN$, respectively),
 they considered the decomposition  
 \[
  \frac{X^{(n)}_\nt}{(\EE(\xi^{(n)}_{1,1}))^{\nt}\EE(X_n^{(n)})} 
      = \sum_{k=1}^{\nt} U^{(n)}_k, \qquad n\in\NN, \; t\in\RR_+,
 \]
 where 
 \[
  U^{(n)}_k:=\frac{1}{\EE(X^{(n)}_n)} \big( (\EE(\xi^{(n)}_{1,1}))^{-k} X^{(n)}_k - (\EE(\xi^{(n)}_{1,1}))^{-k+1} X^{(n)}_{k-1} \big), \qquad n,k\in\NN,
 \]  
 and then they applied the above mentioned limit theorem for the triangular array of dependent random variables $\{U^{(n)}_k : n,k\in\NN\}$ to show that
\[
	\left(\sum_{k=1}^{\nt}U_k^{(n)}\right)_{t\in\RR_+}
     \quad \text{converges in distribution to}\quad
     \left(\frac{\int_0^t \ee^{-\gamma s}s^{\alpha_1}\,\dd s}{\int_0^1 \ee^{\gamma(1-s)}s^{\alpha_1}\,\dd s}\right)_{t\in\RR_+} \quad \text{as $n\to\infty$.}
\]
In the course of the proof they had to check, for example, an assumption
 on the sum of the conditional second moments of $U_k^{(n)}$, $n,k\in\NN$, namely that
 $\sum_{k=1}^{\nT} \EE( (U^{(n)}_k)^2 \mid X^{(n)}_1,\ldots, X^{(n)}_{k-1})$
    converges in probability to $0$ as $n\to\infty$ for all $T\in\RR_{++}$.
The above proof method of Rahimov and Sharipov \cite{RahSha}
 is different from the one used in the proof of our Theorem \ref{main_3}, since
 we consider a different decomposition of $X_\nt^{(n)}$ given by
\begin{align*}
X_\nt^{(n)}
		=&\sum_{j=1}^\nt\left(\EE(\xi_{1,1}^{(n)})\right)^{\nt-j}\EE\left(Y_{j-1}^{(n)}\right)
		+\sum_{j=1}^\nt\left(\EE(\xi_{1,1}^{(n)})\right)^{\nt-j}\left(Y_{j-1}^{(n)}-\EE\left(Y_{j-1}^{(n)}\right)\right)\\
		&+\sum_{j=1}^\nt\left(\EE(\xi_{1,1}^{(n)})\right)^{\nt-j}\left(X_j^{(n)}- \EE(\xi_{1,1}^{(n)})X_{j-1}^{(n)}-Y_{j-1}^{(n)}\right), \qquad n\in\NN,\, t\in\RR_+,
\end{align*}
see also \eqref{3_cr_X_decomp} and \eqref{X_decomp_3}.
Using a limit theorem for the expected value functions of GWII processes due to Barczy and Bezd\'any \cite[part \textup{(ii)} of Theorem 3.5]{BarBez2} (see also Theorem \ref{exp_thm}),
 we have that, after appropriate scaling, the sequence of stochastic processes formed from the first terms on the right hand
 side of the equality above converges in distribution to the limit process \eqref{main_3_X_conv} in Theorem \ref{main_3}.
To show that the remaining two terms become negligible as $n\to\infty$ with the same scaling,
 we use a kind of Slutsky's lemma for stochastic processes with c\`adl\`ag sample paths
 (see, e.g., Jacod and Shiryaev \cite[Chapter VI, Lemma 3.31]{JacShi}).

\section{Galton--Watson processes with inhomogeneous immigration}\label{Section_multi-type_GWI}

In this section, first we introduce some notations and hypotheses that are used throughout the paper, 
 and then we recall some results of Barczy and Bezd\'any \cite{BarBez2} on functional limit theorems 
 for some Galton–Watson processes with inhomogeneous immigration, see Theorems \ref{main_2} and \ref{exp_thm}.

Let $\ZZ_+$, $\NN$, $\QQ$, $\RR$, $\RR_+$ and $\RR_{++}$ denote the set
of non-negative integers, positive integers, rational numbers, real numbers, non-negative real
numbers and positive real numbers, respectively.
The maximum of two real numbers $x,y\in\RR$ is denoted by $x\vee y$.

Every random variable will be defined on a fixed probability space $(\Omega, \cA, \PP)$.
Convergence in probability, convergence in $L_2$, equality in distribution and almost sure equality
 is denoted by $\stoch$, $\qmean$, $\distre$ and $\ase$, respectively.
A function $f : \RR_+ \to \RR^d$ is called c\`adl\`ag if it is right continuous with left limits.
For $d\in\NN$, we will use $\DD(\RR_+,\RR^d)$ for the set of c\`adl\`ag functions from $\RR_+$ to $\RR^d$,
and $\CC(\RR_+,\RR^d)$ for the set of continuous functions from $\RR_+$ to $\RR^d$.
For $d\in\NN$, the weak convergence of the finite dimensional distributions of $\RR^d$-valued stochastic processes with sample paths in $\DD(\RR_+, \RR^d)$
and the weak convergence of $\RR^d$-valued stochastic processes with sample paths in $\DD(\RR_+, \RR^d)$
are denoted by $\distrf$ and $\distr$, respectively.
For functions $f:\RR_+\to\RR^d$ and $f_n:\RR_+\to\RR^d$, $n \in \NN$,
 we write $f_n \lu f$ as $n\to\infty$ if $f_n$ converges to $f$
 locally uniformly as $n\to\infty$, i.e., if $\sup_{t \in K\cap\RR_+} \|f_n(t) - f(t)\| \to 0$
 as $n \to \infty$ for all compact sets $K$ of $\RR$,
 or, equivalently, if $\sup_{t \in [0,T]} \|f_n(t) - f(t)\| \to 0$ as $n\to\infty$ for all $T\in\RR_+$
 (for some properties of locally uniform convergence, see Appendix \ref{GWII_moments_CMT}).
For functions $f\in\DD(\RR_+,\RR^d)$ and $f_n\in\DD(\RR_+,\RR^d)$, $n \in \NN$,
 we write $f_n\Jtoo{d}f$ as $n\to\infty$ if $f_n$ converges to $f$
 as $n\to\infty$ in the Skorokhod $J_1$ topology,
 and if the dimension is clear from context, simply $f_n \Jto f$ as $n\to\infty$
 (for details about Skorokhod $J_1$ topology, see Appendix \ref{GWII_moments_CMT}).

In this paper, we consider a sequence of GWII processes.
For each $n\in\NN$, let $(X_k^{(n)})_{k\in\ZZ_+}$ be a GWII process in the sense of \eqref{X_def}
with immigration process $(Y_k^{(n)})_{k\in\ZZ_+}$, that is,
\begin{equation}\label{Xn_def}
X_{k}^{(n)}=\sum_{j=1}^{X_{k-1}^{(n)}}\xi_{k,j}^{(n)}+Y_{k-1}^{(n)}, \qquad \text{$k\in\NN$ \ with \ $X_0^{(n)}=0$},\quad n\in\NN.
\end{equation}

For the rest of this paper,
\begin{equation*}
\text{
we assume that \ $m_n\in\RR_{++}$, $n\in\NN$, \ is a fixed sequence of positive real numbers.
}\end{equation*}

Next, we introduce four hypotheses.
Whenever we suppose that some of these hypotheses are satisfied, we will state them explicitly.
\begin{enumerate}[label=(\textbf{A\arabic*})]
\item\label{A1}
	We have that $\EE\big(\xi_{1,1}^{(n)}\big)<\infty$, $n\in\NN$.
\item\label{A2}
	We have that $\EE\big(Y_k^{(n)}\big)<\infty$, $k\in\ZZ_+$, $n\in\NN$,
	and there exists $L\in\DD(\RR_+,\RR)$ such that
	\begin{equation*}\label{E_conv}
	\limsup_{n\to\infty}\sup_{s\in[0,t]}\EE\big(m_n^{-1}Y_{\ns}^{(n)}\big)
	\leq L(t),	\qquad t\in\RR_+.
	\end{equation*}
\item\label{A3}
	There exists a $\RR_+$-valued stochastic process $(\cY_t)_{t\in\RR_+}$ with c\`adl\`ag sample paths such that
	\begin{equation*}
	\left(m_n^{-1}Y_\nt^{(n)}\right)_{t\in\RR_+}
	\distr
	\left(\cY_t\right)_{t\in\RR_+}
	\qquad \text{as $n\to\infty$.}
	\end{equation*}
\item\label{A4}
	We have that $\EE\big((\xi_{1,1}^{(n)})^2\big)<\infty$, $n\in\NN$, and $m_n^{-1}\var\big(\xi_{1,1}^{(n)}\big)\to0$ as $n\to\infty$.
\end{enumerate}

In the next remark, we formulate a simple observation regarding our hypotheses.

\begin{Rem}\label{Rem_1}
	Hypothesis \textup{\ref{A2}} holds if $\EE\big(Y_k^{(n)}\big)<\infty$, $k\in\ZZ_+$, $n\in\NN$,
	 and there exists a function $\beta\in\DD(\RR_+,\RR)$ such that
	\begin{equation}\label{A2_beta}
	\left(\EE\big(m_n^{-1}Y_\nt^{(n)}\big)\right)_{t\in\RR_+}\Jto\left(\beta(t)\right)_{t\in\RR_+} \qquad \text{as $n\to\infty$.}
	\end{equation}
	In this case, we may choose $L(t):=\sup_{s\in[0,t+1]}\beta(s)$, $t\in\RR_+$ (see Barczy and Bezd\'any \cite[part (i) of Remark 2.4]{BarBez2}).
	Note also that if $\beta$ is continuous, then, by part \textup{(i)} of Lemma \ref{Jto_basic},
	 \eqref{A2_beta} is equivalent to
	\begin{equation}\label{A2_beta_cont}
	\left(\EE(m_n^{-1}Y_\nt^{(n)})\right)_{t\in\RR_+}\lu\left(\beta(t)\right)_{t\in\RR_+} \qquad \text{as $n\to\infty$.}
	\end{equation}
\proofend
\end{Rem}

We recall two results from Barczy and Bezd\'any \cite[Theorems 3.3 and part (ii) of Theorem 3.5]{BarBez2}.

\begin{Thm}\label{main_2}
For each $n\in\NN$, let $(X_k^{(n)})_{k\in\ZZ_+}$ be given by \eqref{Xn_def},
and assume that \textup{\ref{A1}--\ref{A4}} hold.
Furthermore, assume that there exist $\gamma\in\RR$ and $\gamma_n\in\RR$, $n\in\NN$,
such that $\gamma_n\to\gamma$ as $n\to\infty$ and $\EE(\xi_{1,1}^{(n)})=1+\frac{\gamma_n}{n}$, $n\in\NN$.
Then
\begin{equation*}
\left(\begin{bmatrix}
	m_n^{-1}Y_\nt^{(n)}\\[1mm]
	(nm_n)^{-1}X_\nt^{(n)}
	\end{bmatrix}\right)_{t\in\RR_+}
\distr	\left(\begin{bmatrix}
	\cY_t\\
	\int_0^t\ee^{\gamma(t-s)}\cY_s\,\dd s
	\end{bmatrix}\right)_{t\in\RR_+}
 \qquad \text{as $n\to\infty$.}
\end{equation*}
\end{Thm}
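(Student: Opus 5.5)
We plan to decompose $X^{(n)}_\nt$ into a drift part driven by the immigration and a martingale-type noise part, show that the scaled noise part is negligible, and identify the limit of the drift part by the continuous mapping theorem. Write $\varrho_n:=\EE(\xi^{(n)}_{1,1})=1+\gamma_n/n$ and $M^{(n)}_j:=X^{(n)}_j-\varrho_n X^{(n)}_{j-1}-Y^{(n)}_{j-1}$. Iterating \eqref{Xn_def} with $X_0^{(n)}=0$ gives
\[
X^{(n)}_\nt=\sum_{j=1}^{\nt}\varrho_n^{\nt-j}Y^{(n)}_{j-1}+\sum_{j=1}^{\nt}\varrho_n^{\nt-j}M^{(n)}_j .
\]
With respect to the filtration $\cF^{(n)}_k:=\sigma(Y^{(n)}_i,\,i\in\ZZ_+;\ \xi^{(n)}_{i,\ell},\,i\leq k,\,\ell\in\NN)$, the increments $M^{(n)}_j$ are martingale differences. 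This is where the independence of the offspring variables from the whole immigration process is used. We have $\EE((M^{(n)}_j)^2\mid\cF^{(n)}_{j-1})=\var(\xi^{(n)}_{1,1})X^{(n)}_{j-1}$.

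\textbf{Step 1 (noise negligible).} Set $Z^{(n)}_k:=\sum_{j=1}^k\varrho_n^{-j}M^{(n)}_j$, which is a martingale. Then the second sum equals $\varrho_n^{\nt}Z^{(n)}_\nt$, and $\varrho_n^{\nt}$ is bounded on $[0,T]$ since $\gamma_n\to\gamma$. By Doob's inequality,
\[
\EE\Big(\sup_{k\leq \nT}|Z^{(n)}_k|^2\Big)\leq 4\var(\xi^{(n)}_{1,1})\sum_{j=1}^{\nT}\varrho_n^{-2j}\EE(X^{(n)}_{j-1}).
\]
Next we bound $\EE(X^{(n)}_{j})=\sum_{i\leq j}\varrho_n^{j-i}\EE(Y^{(n)}_{i-1})$. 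By \ref{A2}, for large $n$ we have $\EE(Y^{(n)}_{i-1})\leq m_n(L(T)+1)$ for $i\leq \nT$, so $\EE(X^{(n)}_j)\leq C_T\,n m_n$ uniformly for $j\leq\nT$. Hence the right-hand side above is $O(\var(\xi^{(n)}_{1,1})\,n^2 m_n)$. Dividing by $(nm_n)^2$ gives $O(m_n^{-1}\var(\xi^{(n)}_{1,1}))\to0$ by \ref{A4}. Therefore $\sup_{t\leq T}(nm_n)^{-1}|\sum_{j\leq\nt}\varrho_n^{\nt-j}M^{(n)}_j|\stoch0$ for every $T$.

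\textbf{Step 2 (drift part).} Write $n^{-1}m_n^{-1}\sum_{j=1}^{\nt}\varrho_n^{\nt-j}Y^{(n)}_{j-1}=\int_0^{\nt/n}\varrho_n^{\nt-\lfloor ns\rfloor-1}\,m_n^{-1}Y^{(n)}_{\ns}\,\dd s$. Since $\varrho_n^{\nt-\ns-1}\to\ee^{\gamma(t-s)}$ uniformly on $0\leq s\leq t\leq T$, this equals $\Phi(m_n^{-1}Y^{(n)}_{\lfloor n\cdot\rfloor})(t)+R_n(t)$, where $\Phi(f)(t):=\int_0^t\ee^{\gamma(t-s)}f(s)\,\dd s$. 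The error $R_n$ satisfies $\sup_{t\leq T}|R_n(t)|\leq \varepsilon_n\int_0^T m_n^{-1}Y^{(n)}_\ns\dd s+$ (a boundary term over one interval of length $1/n$). Both terms tend to $0$ in $L_1$ by \ref{A2}; the boundary term is bounded by $n^{-1}\sup_{s\leq T}m_n^{-1}\EE Y^{(n)}_\ns\cdot C$.

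\textbf{Step 3 (joint convergence).} The map $f\mapsto(f,\Phi(f))$ from $\DD(\RR_+,\RR)$ to $\DD(\RR_+,\RR^2)$ is continuous in $J_1$. The reason is that $J_1$ convergence implies locally bounded pointwise a.e.\ convergence, so $\Phi(f_n)\to\Phi(f)$ locally uniformly, and a $J_1$-convergent component paired with a continuous locally uniformly convergent component converges in $J_1(2)$. This is the continuous mapping version in Appendix \ref{GWII_moments_CMT}. Applying it with \ref{A3} yields convergence of $(m_n^{-1}Y^{(n)}_\nt,\Phi(m_n^{-1}Y^{(n)}_{\lfloor n\cdot\rfloor})(t))$ to the claimed limit.

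\textbf{Step 4 (conclusion).} Steps 1 and 2 show that the difference between the scaled process and this pair tends to $0$ in probability locally uniformly. Slutsky's lemma for càdlàg processes (Jacod--Shiryaev VI.3.31) then concludes the proof.

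The main obstacle is Step 1: it requires the uniform first-moment bound on $X^{(n)}$ and the martingale structure in the presence of dependent immigration. Both hinge on conditioning on the entire immigration sequence.
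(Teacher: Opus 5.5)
\textbf{Overall.} Your route is the one behind the cited proof of Barczy and Bezd\'any. It consists of the decomposition \eqref{3_cr_X_decomp}, Doob's inequality for $\sum_j\varrho_n^{-j}M_j^{(n)}$ using only \textup{\ref{A2}} and \textup{\ref{A4}} (this is the estimate (7.5) that the paper invokes in the proof of Theorem \ref{main_3}), continuous mapping for the immigration-driven part, and Jacod--Shiryaev VI.3.31. Steps 1, 3 and 4 are fine. Step 2, however, contains a claim that is false as justified.

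\textbf{The gap in Step 2.} The boundary term is
\[
\int_{\nt/n}^{t}\ee^{\gamma(t-s)}m_n^{-1}Y^{(n)}_{\ns}\,\dd s=m_n^{-1}Y^{(n)}_{\nt}\int_{\nt/n}^{t}\ee^{\gamma(t-s)}\,\dd s ,
\]
so its supremum over $t\in[0,T]$ is of order $n^{-1}\max_{0\leq k\leq\nT}m_n^{-1}Y^{(n)}_k$.

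Your bound $Cn^{-1}\sup_{s\leq T}\EE(m_n^{-1}Y^{(n)}_{\ns})$ controls the expectation at each fixed $t$, not the expectation of the supremum. From \textup{\ref{A2}} you only get $n^{-1}\EE(\max_{k\leq\nT}m_n^{-1}Y^{(n)}_k)\leq n^{-1}\sum_{k\leq\nT}\EE(m_n^{-1}Y^{(n)}_k)=O(1)$.

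This is not a technicality. Take $m_n:=1$ and $Y^{(n)}_k:=n\bbone_{\{U_n=k\}}$ with $U_n$ uniform on $\{0,\dots,n-1\}$, as in Remark \ref{Y_no_conv}. Then \textup{\ref{A2}} holds with $L\equiv1$, but $n^{-1}\max_{k\leq n}Y^{(n)}_k=1$ for all $n$. So your boundary term is not uniformly negligible, and ``$\sup_{t\leq T}|R_n(t)|\to0$ in $L_1$ by \textup{\ref{A2}}'' fails.

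\textbf{The fix.} You must use \textup{\ref{A3}} here. By Prokhorov's theorem, the laws of $(m_n^{-1}Y^{(n)}_\nt)_{t\in\RR_+}$, $n\in\NN$, are tight in $\DD(\RR_+,\RR)$. $J_1$-tightness includes the compact containment condition: $\sup_{t\in[0,T]}m_n^{-1}Y^{(n)}_\nt$, $n\in\NN$, is stochastically bounded for every $T$ (Jacod and Shiryaev, Chapter VI, Theorem 3.21). Hence $n^{-1}\sup_{t\leq T}m_n^{-1}Y^{(n)}_\nt\stoch0$, which is all Slutsky's lemma needs.

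\textbf{An alternative fix.} Keep the boundary inside deterministic maps
\[
\Phi_n(g)(t):=\int_0^{\nt/n}\varrho_n^{\nt-\ns-1}g(s)\,\dd s .
\]
If $g_n\Jto g$, then $\sup_{s\leq T}|g_n(s)|$ is bounded, so $\Phi_n(g_n)\lu\Phi(g)$. The extended continuous mapping theorem (Kallenberg, Theorem 3.27) applied to $g\mapsto(g,\Phi_n(g))$ then handles the drift part directly.

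\textbf{A minor point on Step 3.} Lemma \ref{Conv2Funct} as stated here requires a continuous limit and locally uniform convergence, so it does not cover a c\`adl\`ag $\cY$. What you actually need is the ordinary continuous mapping theorem, which applies because $f\mapsto(f,\Phi(f))$ is $J_1$-continuous everywhere, as you argued.
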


The next result establishes a limit theorem for the expected value functions of the GWII processes in Theorem \ref{main_2}.

\begin{Thm}\label{exp_thm}
For each $n\in\NN$, let $(X_k^{(n)})_{k\in\ZZ_+}$ be given by \eqref{Xn_def},
and assume that \textup{\ref{A1}} holds, $\EE(Y_k^{(n)})<\infty$, $k\in\ZZ_+$, $n\in\NN$,
 and there exists a function $\beta\in\DD(\RR_+,\RR)$ such that \eqref{A2_beta} holds.
Furthermore, assume that there exist
	 $\gamma\in\RR$ and $\gamma_n\in\RR$, $n\in\NN$,
	 such that $\gamma_n\to\gamma$ as $n\to\infty$
	 and $\EE(\xi_{1,1}^{(n)})=1+\frac{\gamma_n}{n}$, $n\in\NN$.
Then
	\begin{equation*}\label{cr_exp}
	\left(
	(nm_n)^{-1}\EE\left(
	X_\nt^{(n)}\right)\right)_{t\in\RR_+}
		\lu
	\left(
	\int_0^t\ee^{\gamma(t-s)}\beta(s)\,\dd s\right)_{t\in\RR_+}
	 \qquad \text{as $n\to\infty$.}
	\end{equation*}
\end{Thm}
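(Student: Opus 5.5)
Writing $\mu_n:=\EE(\xi_{1,1}^{(n)})=1+\frac{\gamma_n}{n}$ and taking expectations in \eqref{Xn_def}, I would use the independence of $X_{k-1}^{(n)}$ (which depends only on $\xi_{i,\cdot}^{(n)}$, $i\leq k-1$, and the $Y^{(n)}$'s) from $\{\xi_{k,j}^{(n)}:j\in\NN\}$ together with Wald's identity. This gives $\EE(X_k^{(n)})=\mu_n\EE(X_{k-1}^{(n)})+\EE(Y_{k-1}^{(n)})$ for $k\in\NN$. Since $X_0^{(n)}=0$, iterating yields
\[
 (nm_n)^{-1}\EE\big(X_\nt^{(n)}\big)=\frac1n\sum_{j=1}^{\nt}\mu_n^{\nt-j}\,\EE\big(m_n^{-1}Y_{j-1}^{(n)}\big)
 =\int_0^{\nt/n}\mu_n^{\nt-\lfloor ns\rfloor-1}\,\beta_n(s)\,\dd s,
\]
where $\beta_n(s):=\EE(m_n^{-1}Y_{\ns}^{(n)})$, so that $\beta_n\Jto\beta$ by \eqref{A2_beta}. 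This is a Riemann-sum representation, and the task becomes showing that it converges locally uniformly to $\int_0^t\ee^{\gamma(t-s)}\beta(s)\,\dd s$.

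Next I would fix $T>0$ and split the difference into three pieces. First, the kernel: on $0\leq s\leq t\leq T$, $\sup|\mu_n^{\nt-\lfloor ns\rfloor-1}-\ee^{\gamma(t-s)}|\to0$. This follows from $n\log\mu_n=\gamma_n+O(1/n)\to\gamma$ and $|(\nt-\lfloor ns\rfloor-1)/n-(t-s)|\leq 2/n$, together with the uniform boundedness of the kernel by $\ee^{(|\gamma|+1)T}$ for large $n$. Second, the upper limit: replacing $\nt/n$ by $t$ costs at most $\frac1n\ee^{(|\gamma|+1)T}\sup_{s\leq T}|\beta_n(s)|$. Third, the integrand: $\int_0^t \ee^{\gamma(t-s)}|\beta_n(s)-\beta(s)|\,\dd s\leq \ee^{|\gamma|T}\int_0^T|\beta_n(s)-\beta(s)|\,\dd s$, which is uniform in $t\in[0,T]$.

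For the first two pieces I need $\sup_n\sup_{s\in[0,T]}|\beta_n(s)|<\infty$ for large $n$. This follows from $J_1$ convergence: choosing time changes $\lambda_n$ with $\sup_{s\le T}|\beta_n(s)-\beta(\lambda_n(s))|\to0$ and $\lambda_n$ close to the identity gives the bound by $\sup_{[0,T+1]}|\beta|+1<\infty$, since a càdlàg function is bounded on compacts. Remark \ref{Rem_1} records the same bound.

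The third piece is the main obstacle, because $J_1$ convergence is not uniform convergence when $\beta$ has jumps. I would use the standard fact (Appendix \ref{GWII_moments_CMT}, or Jacod--Shiryaev VI.2) that $\beta_n\Jto\beta$ implies $\beta_n(s)\to\beta(s)$ at every continuity point $s$ of $\beta$. A càdlàg function has at most countably many discontinuities, so this is pointwise convergence Lebesgue-a.e. on $[0,T]$. Combined with the uniform bound above, dominated convergence gives $\int_0^T|\beta_n-\beta|\,\dd s\to0$. Putting the three pieces together yields the supremum over $t\in[0,T]$ tending to $0$, i.e. the claimed $\lu$ convergence.
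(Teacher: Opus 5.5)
Your argument is correct and complete. Note that the paper does not prove this statement itself: it is quoted from Barczy and Bezd\'any \cite[Theorem 3.5(ii)]{BarBez2}, so there is no in-text proof to compare your route against. The only overlap is your first step, the identity $\EE(X_k^{(n)})=\sum_{\ell=0}^{k-1}a_n^{k-1-\ell}\EE(Y_\ell^{(n)})$ from Lemma \ref{A_1}(ii), which the paper also uses in Section \ref{Sec_Proof_Thm_main3}.

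The remaining steps are all sound:
\begin{itemize}
\item the Riemann-sum representation;
\item the uniform convergence of the discrete kernel, via $n\log(1+\gamma_n/n)\to\gamma$ and $|(\nt-\lfloor ns\rfloor-1)/n-(t-s)|\leq 2/n$;
\item the local boundedness of $\beta_n$ obtained from $J_1$ convergence;
\item dominated convergence based on convergence of $\beta_n$ at the continuity points of $\beta$.
\end{itemize}
Together they correctly handle the one genuinely delicate point: $\beta$ may jump, so $\beta_n\to\beta$ need not be locally uniform.

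Two minor corrections. First, the fact that $J_1$ convergence yields $\beta_n(s)\to\beta(s)$ at continuity points of $\beta$ is not in Appendix \ref{GWII_moments_CMT}, so cite Jacod and Shiryaev \cite[Chapter VI, Section 2]{JacShi} for it. Second, $\mu_n$ could vanish for finitely many $n$, so $\log\mu_n$ is only defined for large $n$. This suffices, since only the tail of the sequence matters.
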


We mention that Theorem \ref{exp_thm} implies part (a) of Lemma 2 in Rahimov \cite{Rah1} for regularly varying functions at infinity
 with positive indices, see the proof of Theorem \ref{RahSha_0}. 

\section{Results}\label{Section_conv_results}

First, we derive a set of sufficient conditions (different from the one in Theorem \ref{main_2})
 under which $\big( (nm_n)^{-1} X^{(n)}_{\nt}\big)_{t\in\RR_+}$, $n\in\NN$,
 converges weakly to a deterministic limit process, see Theorem \ref{main_3}.
Comparing the assumptions of Theorem \ref{main_2} and those of Theorem \ref{main_3},
 we point out the facts that in Theorem \ref{main_3},
 instead of hypothesis \ref{A2} we suppose that \eqref{A2_beta} holds for a function $\beta\in\DD(\RR_+,\RR)$,
 and instead of hypothesis \ref{A3} we suppose that
 the expectation functions of
 $(m_n^{-1}|Y_\nt^{(n)}-\EE(Y_\nt^{(n)})|)_{t\in\RR_+}$, $n\in\NN$,
 converge to the constant zero function locally uniformly.
Furthermore, Theorem \ref{main_3} only asserts the weak convergence of
 the stochastic processes $((nm_n)^{-1}X_\nt^{(n)})_{t\in\RR_+}$, $n\in\NN$,
 not the weak convergence of the stochastic processes $([m_n^{-1}Y_\nt^{(n)},(nm_n)^{-1}X_\nt^{(n)}]^\top)_{t\in\RR_+}$, $n\in\NN$
 (in fact, it is possible that the stochastic processes $(m_n^{-1}Y_\nt^{(n)})_{t\in\RR_+}$, $n\in\NN$,
 do not converge weakly, see Remark \ref{Y_no_conv}).
  
\begin{Thm}\label{main_3}
For each $n\in\NN$, let $(X_k^{(n)})_{k\in\ZZ_+}$ be given by \eqref{Xn_def}.
Suppose that
\begin{itemize}
\item[(a)]
	hypotheses \textup{\ref{A1}} and \textup{\ref{A4}} hold,
\item[(b)]
	there exist
	 $\gamma\in\RR$ and $\gamma_n\in\RR$, $n\in\NN$,
	 such that $\gamma_n\to\gamma$ as $n\to\infty$ and $\EE(\xi^{(n)}_{1,1})=1+\frac{\gamma_n}{n}$, $n\in\NN$,
\item[(c)] $\EE(Y^{(n)}_k)<\infty$, $k\in\ZZ_+$, $n\in\NN$, and
	there exists a function $\beta\in\DD(\RR_+,\RR)$ such that \eqref{A2_beta} holds,
\item[(d)]
	it holds that
	\begin{equation}\label{Y_mom_zero}
	\left(\EE\left(\left|m_n^{-1}Y_\nt^{(n)}-\EE(m_n^{-1}Y_\nt^{(n)})\right|\right)\right)_{t\in\RR_+}
	\lu\left(0\right)_{t\in\RR_+} \qquad \text{as $n\to\infty$.}
	\end{equation}
\end{itemize}
Then
\begin{equation}\label{main_3_X_conv}
\left((nm_n)^{-1}X_\nt^{(n)}\right)_{t\in\RR_+}
\distr	\left(\int_0^t\ee^{\gamma(t-s)}\beta(s)\,\dd s\right)_{t\in\RR_+}
 \qquad \text{as $n\to\infty$.}
\end{equation}
\end{Thm}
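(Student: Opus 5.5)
We follow the decomposition from the introduction. Write $\varrho_n:=\EE(\xi^{(n)}_{1,1})=1+\gamma_n/n$ and $M^{(n)}_j:=X^{(n)}_j-\varrho_n X^{(n)}_{j-1}-Y^{(n)}_{j-1}$. Iterating \eqref{Xn_def} with $X^{(n)}_0=0$ gives
\[
(nm_n)^{-1}X^{(n)}_\nt = A_n(t)+B_n(t)+C_n(t),
\]
where $A_n(t):=(nm_n)^{-1}\sum_{j=1}^{\nt}\varrho_n^{\nt-j}\EE(Y^{(n)}_{j-1})$. The second term is $B_n(t):=(nm_n)^{-1}\sum_{j=1}^{\nt}\varrho_n^{\nt-j}(Y^{(n)}_{j-1}-\EE(Y^{(n)}_{j-1}))$, and the third is $C_n(t):=(nm_n)^{-1}\sum_{j=1}^{\nt}\varrho_n^{\nt-j}M^{(n)}_j$. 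Taking expectations in the same identity shows that $A_n(t)=(nm_n)^{-1}\EE(X^{(n)}_\nt)$. This holds because $\EE(M^{(n)}_j)=0$, by the conditional expectation formula and independence of offspring and immigration. Hence, by Theorem \ref{exp_thm}, $A_n$ converges locally uniformly to the deterministic function $h(t):=\int_0^t\ee^{\gamma(t-s)}\beta(s)\,\dd s$. This function is continuous, so $A_n\Jto h$ as well. By the Slutsky-type lemma for c\`adl\`ag processes (Jacod and Shiryaev, Chapter VI, Lemma 3.31), it then suffices to show that $\sup_{t\in[0,T]}|B_n(t)|\stoch0$ and $\sup_{t\in[0,T]}|C_n(t)|\stoch0$ for every $T>0$.

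For $B_n$ we use the bound $\varrho_n^{k}\leq \ee^{|\gamma_n|k/n}\leq \ee^{c(T+1)}=:K_T$ for $k\leq nT$. Here $c:=\sup_n|\gamma_n|$, and we handle the case $\varrho_n<1$ by the bound $\varrho_n^k\leq 1$. Then
\[
\EE\Big(\sup_{t\in[0,T]}|B_n(t)|\Big)\leq \frac{K_T}{n}\sum_{j=0}^{\nT-1}\EE\big|m_n^{-1}Y^{(n)}_j-\EE(m_n^{-1}Y^{(n)}_j)\big|\leq K_T\,T\sup_{s\in[0,T]}\EE\big|m_n^{-1}Y^{(n)}_\ns-\EE(m_n^{-1}Y^{(n)}_\ns)\big|.
\]
The right-hand side tends to $0$ by \eqref{Y_mom_zero}, and Markov's inequality gives $\sup_{[0,T]}|B_n|\stoch0$.

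The main obstacle is $C_n$, since the weights $\varrho_n^{\nt-j}$ depend on $t$. We factor them as $\varrho_n^{\nt}\varrho_n^{-j}$, which gives
\[
\sup_{t\in[0,T]}|C_n(t)|\leq K_T\,\sup_{k\leq \nT}\Big|(nm_n)^{-1}\sum_{j=1}^{k}\varrho_n^{-j}M^{(n)}_j\Big|,
\]
where $\varrho_n^{-j}\leq K_T$ also holds. Conditionally on the immigration and the past, the $M^{(n)}_j$ are martingale differences with respect to the filtration generated by $\{Y^{(n)}_i\}_{i}$ and $X^{(n)}_0,\dots,X^{(n)}_j$. This works because the offspring variables are independent of the whole immigration process, so conditioning on all $Y$'s is legitimate. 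Moreover, $\EE((M^{(n)}_j)^2\mid\cdot)=X^{(n)}_{j-1}\var(\xi^{(n)}_{1,1})$. Doob's maximal inequality then bounds the second moment of the supremum by
\[
\frac{4K_T^2}{n^2m_n^2}\sum_{j=1}^{\nT}\var(\xi^{(n)}_{1,1})\,\EE(X^{(n)}_{j-1})\leq 4K_T^2\,T\,\frac{\var(\xi^{(n)}_{1,1})}{m_n}\sup_{s\in[0,T]}(nm_n)^{-1}\EE(X^{(n)}_\ns).
\]
By Theorem \ref{exp_thm}, the last supremum is bounded for large $n$ by $1+\sup_{[0,T]}|h|$. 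By \ref{A4}, $m_n^{-1}\var(\xi^{(n)}_{1,1})\to0$. Thus $\sup_{[0,T]}|C_n|\qmean0$, and in particular $\sup_{[0,T]}|C_n|\stoch0$. Combining the three terms yields \eqref{main_3_X_conv}.
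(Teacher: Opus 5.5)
Your proof is correct and follows the paper's argument essentially step by step. You use the same decomposition into three terms: the mean term, handled by Theorem \ref{exp_thm}; the centered immigration term, handled by condition (d), a uniform bound on the powers of $\EE(\xi^{(n)}_{1,1})$ and Markov's inequality; and the martingale term. These are combined via Lemma 3.31 in Chapter VI of Jacod and Shiryaev, as in the paper.

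The only difference is that the paper does not write out the martingale estimate; it refers to the proof of (7.5) in Barczy and Bezd\'any's earlier paper. You instead supply the Doob $L_2$-maximal-inequality argument yourself, and you control $\sum_{j\leq \nT}\EE(X^{(n)}_{j-1})$ through Theorem \ref{exp_thm} rather than through hypothesis \ref{A2}, which (c) implies by Remark \ref{Rem_1}. Note that the factorization $\varrho_n^{\nt-j}=\varrho_n^{\nt}\varrho_n^{-j}$ and the bound $\varrho_n^{-j}\leq K_T$ only hold for $n$ large enough, which is harmless.
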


The next remark highlights the connection between condition (b)
 in Theorem \ref{main_3} and the concept of nearly critical GWI processes.

\begin{Rem}\label{crit_rem}
The condition (b) on $\EE(\xi_{1,1}^{(n)})$, $n\in\NN$, in Theorem \ref{main_3}
implies that $\EE(\xi_{1,1}^{(n)})\to1$ as $n\to\infty$.
Furthermore, if $\EE(\xi_{1,1}^{(n)})=1$, $n\in\NN$,
 then $\gamma_n=0$, $n\in\NN$, $\gamma=0$, and
 the limit stochastic process in \eqref{main_3_X_conv} takes the form $(\int_0^t\beta(s)\,\dd s)_{t\in\RR_+}$.
If $(Y_k^{(n)})_{k\in\ZZ_+}$ is a sequence of i.i.d.\ $\ZZ_+$-valued random variables for each $n\in\NN$,
then this means that the corresponding sequence of GWI processes $(X_k^{(n)})_{k\in\ZZ_+}$, $n\in\NN$,
is nearly critical (see, e.g., Isp\'any \cite[Definition 1.1]{Isp}),
 and if, in addition, $\EE(\xi_{1,1}^{(n)})=1$, $n\in\NN$, then the corresponding
 sequence of GWI processes is critical (see, e.g., Athreya and Ney \cite[V.3]{AN}).
\proofend
\end{Rem}

Next, we provide two remarks.
In the first one, we point out the fact that condition (c) in Theorem \ref{main_3} in general does not
imply condition (d) in Theorem \ref{main_3}, and in the second one, we give an example for a sequence
of GWII processes for which the conditions (a)--(d) of Theorem \ref{main_3} hold, but condition \ref{A3} does not.

\begin{Rem}\label{Rem_d}
Let $U_n$, $n\in\NN$, be random variables such that 
$\PP(U_n=n)=n^{-1}$ and $\PP(U_n=0)=1-n^{-1}$, $n\in\NN$,
and define $Y^{(n)}_k:=U_n$, $k\in\ZZ_+$, $n\in\NN$, and $m_n:=1$, $n\in\NN$.
Then $\EE(m_n^{-1} Y^{(n)}_{\nt}) = \EE(U_n) = 1$, $n\in\NN$, $t\in\RR_+$,
yielding that condition (c) of Theorem \ref{main_3} holds with $\beta(t):=1$, $t\in\RR_+$.
On the other hand, we have that 
 \begin{align*}
  \EE\big(\big|m_n^{-1} Y^{(n)}_{\nt} - \EE(m_n^{-1} Y^{(n)}_{\nt})\big|\big) 
   = \EE( \vert U_n - 1\vert)
   = (n-1)n^{-1} + 1(1-n^{-1})
   = 2-2n^{-1}\to 2
 \end{align*}
 as $n\to\infty$, yielding that condition (d) of Theorem 3.1 does not hold.
\proofend
\end{Rem}

\begin{Rem}\label{Y_no_conv}
For each $n\in\NN$, let $U_n$ be a random variable uniformly distributed on $\{0,\dots,n-1\}$.
For each $n\in\NN$, let $m_n:=n$, and let $Y_k^{(n)}=n\cdot\bbone_{\{U_n=k\}}$, $k\in\ZZ_+$, that is,
\[
Y_k^{(n)}=\begin{cases}
n, & \text{if $U_n=k$,}\\
0, & \text{otherwise,}
\end{cases}\qquad k\in\ZZ_+.
\]
Then conditions \textup{(c)} and \textup{(d)} of Theorem \ref{main_3} are satisfied with the function $\beta(t):=0$, $t\in\RR_+$.
Indeed, we have that $\EE(Y_k^{(n)})=0\cdot\PP(U_n\neq k)+n\cdot\PP(U_n=k)$, $k\in\ZZ_+$, $n\in\NN$,
 and thus
\[
\EE(Y_k^{(n)})=n\cdot\PP(U_n=k)
	=\begin{cases}
n\cdot\frac{1}{n}=1, & \text{if $k\in\{0,\dots,n-1\}$,}\\
n\cdot 0=0, & \text{otherwise,}
\end{cases}\qquad n\in\NN.
\]
Consequently, for all $T\in\RR_{++}$, we have that
\[
\sup_{t\in[0,T]}\EE(|n^{-1}Y_\nt^{(n)}-0|)
	=n^{-1}\max_{k\in\{0,\dots,\nT\}}\EE(Y_k^{(n)})
	\leq n^{-1}\cdot1\to0 \qquad \text{as $n\to\infty$,}
\]
 which shows that \eqref{A2_beta_cont} (and thus also \eqref{A2_beta})
 holds with the given function $\beta$, thus condition \textup{(c)} is satisfied.
Furthermore, for all $T\in\RR_{++}$, we have that
\[
\sup_{t\in[0,T]}\EE(|n^{-1}Y_\nt^{(n)}-\EE(n^{-1}Y_\nt^{(n)})|)
	\leq 2n^{-1}\sup_{t\in[0,T]}\EE(Y_\nt^{(n)})
	\leq2n^{-1}\to0 \qquad \text{as $n\to\infty$,}
\]
which shows that condition \textup{(d)} is satisfied.
Consequently, if $\xi_{1,1}^{(n)}$, $n\in\NN$,
 are such that the conditions \textup{(a)} and \textup{(b)} are satisfied,
 then for the corresponding GWII processes $(X_k^{(n)})_{k\in\ZZ_+}$, $n\in\NN$,
 by Theorem \ref{main_3}, we get that $(n^{-2}X_\nt^{(n)})_{t\in\RR_+}\distr(0)_{t\in\RR_+}$ as $n\to\infty$.

Next, we show that there exists no stochastic process $(\cY_t)_{t\in\RR_+}$ with c\`adl\`ag sample paths
 such that condition \ref{A3} holds.
Note that, for all $t\in\RR_+$, we have that
\[
\EE(m_n^{-1}Y_\nt^{(n)})=n^{-1}\EE(Y_\nt^{(n)})\leq n^{-1}\to0 \qquad \text{as $n\to\infty$,}
\]
 and thus $m_n^{-1}Y_\nt^{(n)}\distr0$ as $n\to\infty$.
A similar argument yields that $(m_n^{-1}Y_\nt^{(n)})_{t\in\RR_+}\distrf(0)_{t\in\RR_+}$ as $n\to\infty$.
On the contrary, suppose that there exists a stochastic process
 $(\cY_t)_{t\in\RR_+}$ such that \ref{A3} holds.
Then the condition \textup{(9)} on the modified modulus of continuity
 of $(m_n^{-1}Y_\nt^{(n)})_{t\in\RR_+}$, $n\in\NN$,
 of Theorem 14.10 in Kallenberg \cite{Kal} holds.
Together with the weak convergence of the finite dimensional distributions shown previously,
 by Kallenberg \cite[Theorem 14.10]{Kal}, this yields that
 $(m_n^{-1}Y_\nt^{(n)})_{t\in\RR_+}\distr(0)_{t\in\RR_+}$ as $n\to\infty$,
 and hence $(\cY_t)_{t\in\RR_+}$ is the identically zero process.
On the other hand, recall that, by Jacod and Shiryaev \cite[Chapter VI, Proposition 2.4]{JacShi},
 the mapping $\DD(\RR_+,\RR)\ni f\mapsto\sup_{t\in[0,1]}|f(t)|\in\RR$
 is continuous on $\CC(\RR_+,\RR)$.
Consequently, since the identically zero process has continuous sample paths,
 by the continuous mapping theorem (see, e.g., Billingsley \cite[Chapter 1, Theorem 2.7]{Bil}),
 we get that
\begin{equation}\label{Y_no_conv_sup_conv}
\sup_{t\in[0,1]}\left|m_n^{-1}Y_\nt^{(n)}\right|\distr\sup_{t\in[0,1]}\left|\cY_t\right| \qquad \text{as $n\to\infty$.}
\end{equation}
However, we have that $\sup_{t\in[0,1]}\left|\cY_t\right|=0$ and
\[
\sup_{t\in[0,1]}\left|m_n^{-1}Y_\nt^{(n)}\right|
	=n^{-1}\max_{k\in\{0,\dots,n\}}Y_k^{(n)}
	=n^{-1}\max_{k\in\{0,\dots,n-1\}}Y_k^{(n)}
	=n^{-1} n
	=1, \qquad n\in\NN,
 \]
which shows that \eqref{Y_no_conv_sup_conv} cannot hold,
 yielding that there exists no stochastic process $(\cY_t)_{t\in\RR_+}$ such that \ref{A3} holds.
Consequently, there exists a sequence of GWII processes such that the conditions of Theorem \ref{main_3} hold, but not those of Theorem \ref{main_2}.
\proofend
\end{Rem}

In the next two examples, we present two applications of Theorem \ref{main_3}.
In the first example, the function $\beta$ in condition \textup{(c)} of Theorem \ref{main_3}
 is a constant function, while in the second example, it is a constant multiple of a power function.
 
\begin{Ex}\label{Ex_1}
Let $\gamma\in\RR\setminus\RR_+$ and $\lambda\in\RR_+$,
 and for each $n\in\NN$, let $\gamma_n\in\RR$ and $\lambda_n\in\RR_{++}$
 be such that $\gamma_n\to\gamma$ and $\frac{\lambda_n}{n}\to\lambda$ as $n\to\infty$. 
Note that, without loss of generality, we can assume that $1+\frac{\gamma_n}{n}\in(0,1)$, $n\in\NN$.
For each $n\in\NN$, let $\{Y_k^{(n)}: k\in\ZZ_+\}$ be i.i.d.\ Poisson distributed random variables with parameter $\lambda_n$,
 and let $\{\xi_{k,j}^{(n)}: k,j\in\NN \}$ be i.i.d.\ Bernoulli distributed random variables with parameter $1+\frac{\gamma_n}{n}$.
For each $n\in\NN$, let $(X_k^{(n)})_{k\in\ZZ_+}$ be a GWII process defined in \eqref{Xn_def} with the previously introduced random variables.
Note that for each $n\in\NN$, $(X_k^{(n)})_{k\in\ZZ_+}$ is a Poisson INteger-valued AutoRegressive process of order $1$
 ($\mathrm{PINAR(1)}$ process).
We check that the GWII processes $(X_k^{(n)})_{k\in\ZZ_+}$, $n\in\NN$, satisfy the conditions of Theorem \ref{main_3}
 with $m_n:=n$, $n\in\NN$, and $\beta:\RR_+\to\RR_+$, $\beta(t):=\lambda$, $t\in\RR_+$.
Conditions \textup{(a)} and \textup{(b)} are obviously satisfied.
We have that $\EE(m_n^{-1}Y_{\nt}^{(n)}) = n^{-1}\EE(Y^{(n)}_0)= n^{-1}\lambda_n$, $n\in\NN$, $t\in\RR_+$, and, since  $n^{-1}\lambda_n\to\lambda\in\RR_+$ as $n\to\infty$,
 we have that condition \textup{(c)} holds with the given $\beta$.
Lastly, we have that $\var(m_n^{-1}Y_{\nt}^{(n)}) = n^{-2}\var(Y_0^{(n)}) =n^{-2}\lambda_n$, $n\in\NN$, $t\in\RR_+$, 
 where $n^{-2}\lambda_n\to 0$ as $n\to\infty$, and hence, by Jensen's inequality, we have
 \[
   \sup_{t\in[0,T]}\left(\EE\left(\left|m_n^{-1}Y_{\nt}^{(n)}-\EE(m_n^{-1}Y_{\nt}^{(n)})\right|\right)\right)^2
      	\leq\sup_{t\in[0,T]}  \var(m_n^{-1}Y_{\nt}^{(n)}) = n^{-2}\lambda_n \to0 \qquad \text{as $n\to\infty$}
 \]
 for all $T\in\RR_{++}$. 
Therefore, condition (d) holds as well.
Consequently, by Theorem \ref{main_3}, we have that
\[
\left(n^{-2} X_\nt^{(n)}\right)_{t\in\RR_+}
	\distr\left(\int_0^t\ee^{\gamma(t-s)}\lambda\,\dd s\right)_{t\in\RR_+}
	=\left(\frac{\lambda}{\gamma}(\ee^{\gamma t}-1)\right)_{t\in\RR_+}
		\qquad \text{as $n\to\infty$.}
\]
\proofend
\end{Ex}

\begin{Ex}\label{Ex_2}
Next, we provide an example motivated by Rahimov and Sharipov \cite[Example 1 and Corollary 1]{RahSha}.
Let $f:\RR_{++}\to\RR_{++}$ be a function regularly varying at infinity with index $\alpha\in\RR_{++}$.
Let $\lambda\in\RR_{+}$ and $\lambda_n\in\RR_{++}$, $n\in\NN$, be such that $\lambda_n\to\lambda$ as $n\to\infty$.
Let $\gamma\in\RR\setminus\RR_+$ and $\gamma_n\in\RR$, $n\in\NN$, be such that $\gamma_n\to\gamma$ as $n\to\infty$.
Note that, without loss of generality, we can assume that $1+\frac{\gamma_n}{n}\in(0,1)$, $n\in\NN$.
For each $n\in\NN$, let $\{Y_k^{(n)}: k\in\ZZ_+\}$ be (not necessarily independent)
 Poisson distributed random variables with $\EE(Y_k^{(n)})=\lambda_nf(k+1)$, $k\in\ZZ_+$.
For each $n\in\NN$, let $\{\xi_{k,j}^{(n)}: k,j\in\NN \}$ be i.i.d.\ $\ZZ_+$-valued random variables with finite second moments,
 such that $\EE(\xi_{1,1}^{(n)})= 1+\frac{\gamma_n}{n}$, $n\in\NN$, and $(f(n+1))^{-1}\var(\xi_{1,1}^{(n)})\to0$ as $n\to\infty$.
For each $n\in\NN$, let $(X_k^{(n)})_{k\in\ZZ_+}$ be a GWII process defined in \eqref{Xn_def} with the previously introduced random variables.
We check that the GWII processes $(X_k^{(n)})_{k\in\ZZ_+}$, $n\in\NN$, satisfy the conditions of Theorem \ref{main_3}
 with $m_n:=f(n+1)$, $n\in\NN$, and $\beta:\RR_+\to\RR_+$, $\beta(t):=\lambda t^\alpha$, $t\in\RR_+$.
Conditions \textup{(a)} and \textup{(b)} are obviously satisfied.
We have that $\EE(m_n^{-1}Y_{\nt}^{(n)}) =\lambda_n (f(n+1))^{-1}f(\nt+1)$, $t\in\RR_+$, $n\in\NN$, and thus,
 by Corollary \ref{reg_tf_cor} and part \textup{(ii)} of Lemma \ref{Jto_basic},
 we have that \eqref{A2_beta_cont} holds with the given function $\beta$.
Consequently, by part (i) of Lemma \ref{Jto_basic},  condition \textup{(c)} holds with the given function $\beta$.
Lastly, we have that $\var(m_n^{-1}Y_{\nt}^{(n)}) =\lambda_n (f(n+1))^{-2}f(\nt+1)$, $t\in\RR_+$, $n\in\NN$, 
 where, since $\alpha\in\RR_{++}$, we have that $f(n+1)\to\infty$ as $n\to\infty$
 (see, e.g., Bingham et al.\ \cite[Proposition 1.5.1]{BinGolTeu}),
 and thus $\lambda_n (f(n+1))^{-1}\to0$ as $n\to\infty$.
Therefore, by Corollary \ref{reg_tf_cor} and part (ii) of Lemma \ref{Jto_basic},
 we have that
\[
(\var(m_n^{-1}Y_\nt^{(n)}))_{t\in\RR_+}
	=\left(\frac{\lambda_n}{f(n+1)}\frac{f(\nt+1)}{f(n+1)}\right)_{t\in\RR_+}
	\lu(0\cdot t^\alpha)_{t\in\RR_+}=(0)_{t\in\RR_+} \qquad \text{as $n\to\infty$.}
\]
Hence, by Jensen's inequality, we have
 \[
   \sup_{t\in[0,T]}\left(\EE\left(\left|m_n^{-1}Y_{\nt}^{(n)}-\EE(m_n^{-1}Y_{\nt}^{(n)})\right|\right)\right)^2
      	\leq\sup_{t\in[0,T]}  \var(m_n^{-1}Y_{\nt}^{(n)})\to0 \qquad \text{as $n\to\infty$}
 \]
 for all $T\in\RR_{++}$, thus condition (d) holds as well.
Consequently, by Theorem \ref{main_3}, we have that
\[
\left(\left(n f(n+1)\right)^{-1} X_\nt^{(n)}\right)_{t\in\RR_+}
	\distr\left(\lambda\int_0^t\ee^{\gamma(t-s)}s^\alpha\,\dd s\right)_{t\in\RR_+}
		\qquad \text{as $n\to\infty$.}
\]
We mention that the previous argument also works when $f$
 is replaced by the identically 1 function (which is a particular slowly varying function),
$\lambda=0$, and $\beta(t)=0$, $t\in\RR_+$.
Indeed, let the random variables $\{Y_k^{(n)}: k\in\ZZ_+, n\in\NN\}$
 and $\{\xi_{k,j}^{(n)}: k,j,n\in\NN \}$ be defined as before,
 so that our assumptions hold with $\beta(t):=0$, $t\in\RR_+$, $\lambda:=0$,
 and with $g$ in place of $f$, where $g:\RR_{++}\to\RR_{++}$, $g(t):=1$, $t\in\RR_{++}$,
 that is, $\lambda_n\to0$ as $n\to\infty$,
 $\EE(Y_k^{(n)})=\lambda_n$, $k\in\ZZ_+$, $n\in\NN$,
 and $\var(\xi_{1,1}^{(n)})\to0$ as $n\to\infty$.
Next we verify that the GWII processes $(X_k^{(n)})_{k\in\ZZ_+}$, $n\in\NN$,
 defined in \eqref{Xn_def} with the previously introduced random variables
 satisfy the conditions of Theorem \ref{main_3} with $m_n:=g(n+1)=1$, $n\in\NN$.
As before, conditions \textup{(a)} and \textup{(b)} hold trivially,
 and since $\EE(m_n^{-1}Y_\nt^{(n)})=\EE(Y_0^{(n)})=\lambda_n\to0$
 as $n\to\infty$ for all $t\in\RR_+$, we have that \eqref{A2_beta_cont}
 (and thus also \eqref{A2_beta}) holds
 with the given function $\beta$,
 and thus condition \textup{(c)} holds with the given function $\beta$.
Furthermore, since $\var(m_n^{-1}Y_\nt^{(n)})=\var(Y_0^{(n)})=\lambda_n\to0$
 as $n\to\infty$ for all $t\in\RR_+$,
 by the same argument as before, condition \textup{(d)} also holds.
Consequently, by Theorem \ref{main_3}, we get that
 \begin{align}\label{help_6}
   \big( n^{-1} X^{(n)}_{\nt}\big)_{t\in\RR_+}
       \distr (0)_{t\in\RR_+}
    \qquad \text{as $n\to\infty$.}
 \end{align}
In particular, if $\{\xi_{k,j}^{(n)}: k,j\in\NN \}$ are i.i.d.\ Bernoulli distributed random variables with parameter $1+\frac{\gamma_n}{n}$ for each $n\in\NN$,
 then $\var(\xi^{(n)}_{1,1})= - \frac{\gamma_n}{n} \left(1+\frac{\gamma_n}{n}\right)\to 0$ as $n\to\infty$,  as desired, and hence \eqref{help_6} holds
 with the previously introduced GWII processes $(X^{(n)}_k)_{k\in\ZZ_+}$, $n\in\NN$.
Furthermore, if, in addition, $\{Y_k^{(n)}: k\in\ZZ_+\}$ are independent as well for each $n\in\NN$,
 then the conditions of Example \ref{Ex_1} are satisfied
 with $\lambda=0$, since $\lambda_n\to0$ as $n\to\infty$
 implies that $\frac{\lambda_n}{n}\to0$ as $n\to\infty$.
 Consequently, Example \ref{Ex_1} yields that
 $(n^{-2}X_\nt^{(n)})_{t\in\RR_+}\distr(0)_{t\in\RR_+}$ as $n\to\infty$.
We point out that in this case, \eqref{help_6} also implies the assertion of Example \ref{Ex_1}.
Indeed, since $n^{-1}\to0$ as $n\to\infty$,
 part \textup{(iii)} of Lemma \ref{Jto_basic},
 Lemmas \ref{cont_cont} and \ref{Conv2Funct},
 together with \eqref{help_6} imply that
 $(n^{-2}X_\nt^{(n)})_{t\in\RR_+}
	=(n^{-1}\cdot n^{-1} X^{(n)}_{\nt})_{t\in\RR_+}
       \distr (0\cdot0)_{t\in\RR_+}=(0)_{t\in\RR_+}$
	as $n\to\infty$
 (for a similar argument with more details,
 see the end of the proof of Theorem \ref{RahSha_0} in Section \ref{RahSha_proof}),
 where the limit is exactly $\big(\frac{\lambda}{\gamma}(\ee^{\gamma t}-1)\big)_{t\in\RR_+}$ with $\lambda=0$, which is the limit process in Example \ref{Ex_1}.
\proofend
\end{Ex}

In the next remark, we point out the fact that Theorem \ref{main_3} can be used to recover 
 Theorem 2.1 in Isp\'any et al.\ \cite{IspPapZui} in the case $b^2=0$ (using their notation).

\begin{Rem}\label{IspPapZui_rem}
For each $n\in\NN$, let $\{Y_k^{(n)}: k\in\ZZ_+\}$ be
  i.i.d.\ $\ZZ_+$-valued random variables with $\EE((Y_0^{(n)})^2)<\infty$,
 and let $\{\xi_{k,j}^{(n)}: k,j\in\NN\}$ be i.i.d.\ $\ZZ_+$-valued random variables
 with $\EE((\xi_{1,1}^{(n)})^2)<\infty$.
Suppose that there exist $\gamma\in\RR$ and $\gamma_n\in\RR$, $n\in\NN$,
 such that $\gamma_n\to\gamma$ as $n\to\infty$
 and $\EE(\xi^{(n)}_{1,1})=1+\frac{\gamma_n}{n}$, $n\in\NN$.
In addition, suppose that there exists $\lambda\in \RR_+$
 such that $\EE(Y_0^{(n)})\to\lambda$ as $n\to\infty$,
 and we have that $\var(\xi^{(n)}_{1,1})\to0$ as $n\to\infty$
 and $\var(Y_0^{(n)})\to0$ as $n\to\infty$.
For each $n\in\NN$, let $(X_k^{(n)})_{k\in\ZZ_+}$ be the GWII process
 defined in \eqref{Xn_def} with the previously introduced random variables.
We check that the GWII processes $(X_k^{(n)})_{k\in\ZZ_+}$, $n\in\NN$,
 satisfy the conditions of Theorem \ref{main_3}
 with $m_n:=1$, $n\in\NN$, and $\beta:\RR_+\to\RR_+$, $\beta(t):=\lambda$, $t\in\RR_+$.
Conditions \textup{(a)} and \textup{(b)} are obviously satisfied.
We have that $\EE(m_n^{-1}Y_{\nt}^{(n)}) =\EE(Y_0^{(n)})$, $t\in\RR_+$, $n\in\NN$,
 and thus, $\sup_{t\in\RR_+}|\EE(Y_\nt^{(n)})-\beta(t)|=|\EE(Y_0^{(n)})-\lambda|\to0$ as $n\to\infty$,
 which shows that $(\EE(Y_\nt^{(n)}))_{t\in\RR_+}\lu (\lambda)_{t\in\RR_+}$ as $n\to\infty$.
Hence, by part (i) of Lemma \ref{Jto_basic}, we have that
 condition \textup{(c)} holds with the given function $\beta$.
Condition \textup{(d)} follows from
\[
\sup_{t\in\RR_+}\EE\left(\left|Y_\nt^{(n)}-\EE(Y_\nt^{(n)})\right|\right)
	\leq\sup_{t\in\RR_+}\sqrt{\EE\left(\left|Y_\nt^{(n)}-\EE(Y_\nt^{(n)})\right|^2\right)}
	=\sqrt{\var\left(Y_0^{(n)}\right)}\to0
\]
 as $n\to\infty$, where we used Cauchy-Schwarz inequality.
Consequently, by Theorem \ref{main_3}, we have that
\[
\left(n^{-1}X_\nt^{(n)}\right)_{t\in\RR_+}
	\distr
	\left(\int_0^t\ee^{\gamma(t-s)}\beta(s)\,\dd s\right)_{t\in\RR_+}
	=
	\left(\lambda\int_0^t\ee^{\gamma u}\,\dd u\right)_{t\in\RR_+}
	\qquad \text{as $n\to\infty$,}
\]
 which is nothing else but Theorem 2.1 in Isp\'any et al.\ \cite{IspPapZui} in the case $b^2=0$ (in their notation).
\proofend
\end{Rem}

As an additional application, in Section \ref{RahSha_proof},
 we will show that Theorem \ref{main_3} can be used to
 derive Theorem 1 in Rahimov and Sharipov \cite{RahSha}
 in case of $\alpha_1,\alpha_2\in\RR_{++}$ (using the notation of Theorem \ref{RahSha_0})
 which we recall below.
For completeness, we mention that this result of Rahimov and Sharipov \cite{RahSha}
 holds also in the case when $\alpha_1=0$ or $\alpha_2=0$.
 
\begin{Thm}[Rahimov and Sharipov {\cite[Theorem 1]{RahSha}}]\label{RahSha_0}
For each $n\in\NN$, let $(X_k^{(n)})_{k\in\ZZ_+}$ be given by \eqref{Xn_def}.
Assume that $\EE((\xi_{1,1}^{(n)})^2)<\infty$, $n\in\NN$,
 and $\EE((Y_k^{(n)})^2)<\infty$, $k\in\ZZ_+$, $n\in\NN$.
Let $\alpha_1,\alpha_2\in\RR_{++}$, and let $R_1,R_2:\RR_{++}\to\RR_{++}$
 be functions regularly varying at infinity
 with indices $\alpha_1$ and $\alpha_2$, respectively.
Suppose that the following conditions hold.
\begin{enumerate}[label=(\textbf{C\arabic*})]
\item
	For all $t\in\RR_+$, we have that
	\[
	\frac{1}{R_1(n+1)}\max_{0\leq k\leq \nt}\left|\EE(Y_k^{(n)})-R_1(k+1)\right|\to0 \qquad \text{as $n\to\infty$,}
	\]
	and
	\[
	\frac{1}{R_2(n+1)}\max_{0\leq k\leq \nt}\left|\var(Y_k^{(n)})-R_2(k+1)\right|\to0 \qquad \text{as $n\to\infty$.}
	\]
\item
	There exist $\gamma\in\RR$ and $\gamma_n\in\RR$, $n\in\NN$,
	 such that $\gamma_n\to\gamma$ as $n\to\infty$
	 and $\EE(\xi_{1,1}^{(n)})=1+\frac{\gamma_n}{n}$, $n\in\NN$.
\item
	We have that $\frac{1}{R_1(n+1)}\var(\xi_{1,1}^{(n)})\to0$ as $n\to\infty$.
\item
	We have that $R_1(n)\to\infty$ and $\frac{1}{(R_1(n))^2}R_2(n)\to0$ as $n\to\infty$.
\end{enumerate}
Then $\EE(X_n^{(n)})>0$ for sufficiently large $n\in\NN$, and
\[
	\left(\frac{X_\nt^{(n)}}{\EE(X_n^{(n)})}\right)_{t\in\RR_+}
	\distr
	\left(\frac{\int_0^t\ee^{\gamma(t-s)}s^{\alpha_1}\,\dd s}{\int_0^1\ee^{\gamma(1-s)}s^{\alpha_1}\,\dd s}\right)_{t\in\RR_+}
	\qquad \text{as $n\to\infty$.}
\]
\end{Thm}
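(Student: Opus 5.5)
We would derive the result from Theorem \ref{main_3} with $m_n:=R_1(n+1)$, $n\in\NN$, and $\beta(t):=t^{\alpha_1}$, $t\in\RR_+$, and then change the normalization from $nR_1(n+1)$ to $\EE(X_n^{(n)})$ using Theorem \ref{exp_thm}.

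First we verify conditions (a)--(d) of Theorem \ref{main_3}.
\begin{itemize}
\item Conditions (a) and (b) are immediate: finiteness of the second moments of $\xi^{(n)}_{1,1}$ gives \ref{A1}, \textup{(\textbf{C3})} with $m_n=R_1(n+1)$ is exactly the variance part of \ref{A4}, and \textup{(\textbf{C2})} is (b).
\item For (c), fix $T\in\RR_{++}$ and split
\[
\sup_{t\in[0,T]}\Big|\frac{\EE(Y^{(n)}_\nt)}{R_1(n+1)}-t^{\alpha_1}\Big|
\leq \frac{\max_{0\leq k\leq \nT}|\EE(Y_k^{(n)})-R_1(k+1)|}{R_1(n+1)}
+\sup_{t\in[0,T]}\Big|\frac{R_1(\nt+1)}{R_1(n+1)}-t^{\alpha_1}\Big|.
\]
The first term tends to $0$ by \textup{(\textbf{C1})} applied with $t=T$. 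The second tends to $0$ by Corollary \ref{reg_tf_cor} together with part (ii) of Lemma \ref{Jto_basic}, exactly as in Example \ref{Ex_2}. Hence \eqref{A2_beta_cont} holds, and since $\beta$ is continuous, part (i) of Lemma \ref{Jto_basic} gives \eqref{A2_beta}.
\item For (d), we use Jensen's (or the Cauchy--Schwarz) inequality:
\[
\sup_{t\in[0,T]}\Big(\EE\big|m_n^{-1}Y_\nt^{(n)}-\EE(m_n^{-1}Y_\nt^{(n)})\big|\Big)^2
\leq \frac{R_2(n+1)}{(R_1(n+1))^2}\Big(\sup_{t\in[0,T]}\frac{R_2(\nt+1)}{R_2(n+1)}
+\frac{\max_{0\leq k\leq\nT}|\var(Y^{(n)}_k)-R_2(k+1)|}{R_2(n+1)}\Big).
\]
The bracket is bounded in $n$. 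Its first term converges uniformly on $[0,T]$ to $t^{\alpha_2}\leq T^{\alpha_2}$, again by Corollary \ref{reg_tf_cor} and Lemma \ref{Jto_basic}, now applied to $R_2$. Its second term tends to $0$ by the second part of \textup{(\textbf{C1})}. The prefactor $R_2(n+1)/(R_1(n+1))^2$ tends to $0$ by \textup{(\textbf{C4})}, evaluated along $n+1$. Hence (d) holds.
\end{itemize}

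Theorem \ref{main_3} therefore yields
\[
\big((nR_1(n+1))^{-1}X^{(n)}_\nt\big)_{t\in\RR_+}\distr\big(F(t)\big)_{t\in\RR_+},
\qquad F(t):=\int_0^t\ee^{\gamma(t-s)}s^{\alpha_1}\,\dd s .
\]
Since (c) has been verified, Theorem \ref{exp_thm} also applies and gives $(nR_1(n+1))^{-1}\EE(X^{(n)}_\nt)\to F(t)$ locally uniformly. In particular, at $t=1$,
\[
c_n:=\frac{\EE(X_n^{(n)})}{nR_1(n+1)}\to F(1)=\int_0^1\ee^{\gamma(1-s)}s^{\alpha_1}\,\dd s>0 .
\]
Because $R_1>0$, this shows $\EE(X_n^{(n)})>0$ for all sufficiently large $n$.

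Finally, we write
\[
\frac{X^{(n)}_\nt}{\EE(X^{(n)}_n)}=c_n^{-1}\cdot (nR_1(n+1))^{-1}X^{(n)}_\nt .
\]
The deterministic constant functions $c_n^{-1}$ converge to $1/F(1)$. The limit $F$ is continuous and deterministic. So we can combine part (iii) of Lemma \ref{Jto_basic} with Lemmas \ref{cont_cont} and \ref{Conv2Funct}, which give continuity of the product map at continuous limits and the corresponding continuous mapping theorem, as done at the end of Example \ref{Ex_2}. This yields convergence in distribution to $F(t)/F(1)$, which is the claimed limit.

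The main obstacle is step (d). There one must control the variances uniformly in $t\in[0,T]$, including small $t$, where $\var(Y^{(n)}_k)$ is only known up to the error in \textup{(\textbf{C1})}. This needs the uniform convergence theorem for regularly varying functions with positive index, used via Corollary \ref{reg_tf_cor}, together with the scaling provided by \textup{(\textbf{C4})}. A secondary point is the harmless shift from $n$ to $n+1$ in \textup{(\textbf{C4})}.
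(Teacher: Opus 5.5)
Your proposal is correct and follows essentially the same route as the paper. Both check conditions (a)--(d) of Theorem \ref{main_3} with $m_n=R_1(n+1)$ and $\beta(t)=t^{\alpha_1}$, handle (d) via Jensen's inequality and \textup{(\textbf{C4})}, then use Theorem \ref{exp_thm} at $t=1$ and Lemma \ref{Conv2Funct} to renormalize by $\EE(X_n^{(n)})$. The only difference is cosmetic: in (d) you bound $\var(Y^{(n)}_\nt)/R_2(n+1)$ by a quantity bounded in $n$, whereas the paper first shows it converges locally uniformly to $t^{\alpha_2}$ and then multiplies by the vanishing factor $R_2(n+1)/(R_1(n+1))^2$.
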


\section{Preliminaries for the proofs}\label{Sec_preliminary}

In this section, we present some auxiliary notations and results which will be useful in the proofs.
To improve readability, we will state the relevant facts
 only for the GWII process defined in \eqref{X_def},
 although they hold true for each GWII process defined in \eqref{Xn_def} as well
 after a few natural changes (e.g., writing $a_n$ instead of $a$, where the notations $a$ and $a_n$, $n\in\NN$, are introduced below in \eqref{Notation}).
For the same reason, we will only explicitly define
 the martingale differences \eqref{M_def} for the GWII process given in \eqref{X_def}.
In the proofs, we will denote by $(M_k^{(n)})_{k\in\ZZ_+}$, $n\in\NN$,
 the martingale differences corresponding to the GWII processes defined in \eqref{Xn_def},
defined analogously to \eqref{M_def}.

Let $(X_k)_{k\in\ZZ_+}$ be the GWII process defined in \eqref{X_def}, with $(Y_k)_{k\in\ZZ_+}$ its immigration process,
such that $\EE(\xi_{1,1}^2)<\infty$ and $\EE(Y_k)<\infty$, $k\in\ZZ_+$.
We will often make use of the natural filtration of the stochastic process $([Y_k,X_k])_{k\in\ZZ_+}$, which we denote by $(\cF_k)_{k\in\ZZ_+}$, where
\begin{equation}\label{filtration}
\cF_k:=\sigma\left(Y_0,X_0,\dots,Y_k,X_k\right), \qquad k\in\ZZ_+.
\end{equation}
Introduce the notation
\begin{equation}\label{Notation}
 a:=\EE\left(\xi_{1,1}\right)\in\RR_+.
 \end{equation}
When dealing with the GWII processes defined in \eqref{Xn_def}, for each $n\in\NN$,
 we denote the filtration corresponding to \eqref{filtration} by $(\cF_k^{(n)})_{k\in\ZZ_+}$,
 and we use the notation $a_n:=\EE(\xi_{1,1}^{(n)})$, provided that this quantity exists and is finite.

First, we introduce the martingale differences given by
\begin{equation}\label{M_def}
M_k:=
	X_k-\EE(X_k\mid\cF_{k-1}), \qquad k\in\NN,
\end{equation}
 with $M_0:=0$.
By \eqref{X_def}, using the tower rule, we get that
\begin{equation*}
    \EE(X_k\mid\cF_{k-1})
	=\EE\left(\sum_{j=1}^{X_{k-1}}\xi_{k,j}+Y_{k-1}\,\bigg|\,\cF_{k-1}\right)
	= \EE(\xi_{1,1}) X_{k-1}+Y_{k-1}, \qquad k\in\NN.
\end{equation*}
Hence, using \eqref{M_def} and \eqref{Notation}, we have that
\begin{equation*}\label{M_X}
M_k
	=X_k-aX_{k-1}-Y_{k-1}, \qquad k\in\NN.
\end{equation*}
Consequently, using $X_0=0$ and \eqref{M_def}, we obtain that
\begin{align}\label{X_alt}
\begin{split}
X_k
	&=aX_{k-1}+(M_k+Y_{k-1})
	=a^k X_0+\sum_{j=1}^ka^{k-j}\left(M_j+Y_{j-1}\right)\\
	&=\sum_{j=1}^ka^{k-j}Y_{j-1}+\sum_{j=1}^ka^{k-j}M_j
	=\sum_{j=0}^{k-1}a^{k-1-j}Y_{j}+\sum_{j=1}^ka^{k-j}M_j,
	\qquad k\in\NN.
\end{split}
\end{align}
It is easy to see that these martingale differences are uncorrelated,
since for each $i,j\in\NN$, $i<j$, we have
\begin{equation*}%\label{M_uncorr}
\EE(M_iM_j)
	=\EE(\EE(M_iM_j\mid\cF_{j-1}))
	=\EE(M_i\EE(M_j\mid\cF_{j-1}))
	=\EE(M_i0)=0.
\end{equation*}

Finally, in the next lemma, we present some formulae for the conditional expectation and the conditional variance of $X_k$ and $M_k$
 given $\cF_{k-1}$, respectively, where $k\in\NN$
 (for a proof, see the proof of Lemma A.1 in Barczy and Bezd\'any \cite{BarBez2}).

\begin{Lem}\label{A_1}
Let $(X_k)_{k\in\ZZ_+}$ be a  GWII process defined in \eqref{X_def}.
Assume that $\EE(\xi_{1,1}^2)<\infty$
and $\EE(Y_k)<\infty$, $k\in\ZZ_+$.
Then, for each $k\in\NN$, we have
\begin{enumerate}[label=(\roman*)]
\item
	$\EE(X_k\mid\cF_{k-1})
	=X_k-M_k
	=\EE(\xi_{1,1})X_{k-1}+Y_{k-1}$,
\item
	$\EE(X_k)
	=\EE(\xi_{1,1})\EE(X_{k-1})+\EE(Y_{k-1})
	=\sum_{\ell=1}^k(\EE(\xi_{1,1}))^{k-\ell}\EE(Y_{\ell-1})
	=\sum_{\ell=0}^{k-1}(\EE(\xi_{1,1}))^{k-1-\ell}\EE(Y_{\ell})$,
\item
	$\var(X_k\mid\cF_{k-1})
	=\var(M_k\mid\cF_{k-1})
	=\EE(M_k^2\mid\cF_{k-1})
	=\var(\xi_{1,1})X_{k-1}$,
\item
	$\var(M_k)
	=\EE(M_k^2)
	=\var(\xi_{1,1})\EE(X_{k-1})$.
\end{enumerate}
\end{Lem}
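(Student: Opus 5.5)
The statement immediately above is Lemma \ref{A_1}, which collects elementary conditional moment identities for a single GWII process. The plan is to verify (i) and (iii) by conditioning on $\cF_{k-1}$, and then to obtain (ii) and (iv) by taking expectations.

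For (i), the key observation is that $\{\xi_{k,j}: j\in\NN\}$ is independent of $\cF_{k-1}$. Indeed, $\cF_{k-1}$ is generated by $Y_0,\dots,Y_{k-1}$ together with $X_0,\dots,X_{k-1}$, and the latter are functions of $\{\xi_{i,j}: i\leq k-1\}$ and the $Y$'s. Since $X_{k-1}$ and $Y_{k-1}$ are $\cF_{k-1}$-measurable, I would write
\[
\sum_{j=1}^{X_{k-1}}\xi_{k,j}=\sum_{\ell=0}^\infty\bbone_{\{X_{k-1}=\ell\}}\sum_{j=1}^{\ell}\xi_{k,j},
\]
which holds because $X_{k-1}$ is $\ZZ_+$-valued. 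Taking conditional expectations term by term, justified by monotone convergence since all terms are nonnegative, gives $\EE(\xi_{1,1})X_{k-1}+Y_{k-1}$. The first equality in (i) is just the definition \eqref{M_def}.

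For (ii), I would take expectations in (i), which gives $\EE(X_k)=aEX_{k-1}+\EE(Y_{k-1})$; finiteness of every term follows by induction from $X_0=0$. Iterating this recursion down to $X_0=0$ yields the sum formulas, exactly as in \eqref{X_alt}.

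For (iii), I would note that $M_k=\sum_{j=1}^{X_{k-1}}(\xi_{k,j}-a)$. The same decomposition over $\{X_{k-1}=\ell\}$, combined with independence and $\EE(\xi_{1,1}^2)<\infty$, gives
\[
\EE(M_k^2\mid\cF_{k-1})=\sum_\ell \bbone_{\{X_{k-1}=\ell\}}\,\ell\var(\xi_{1,1})=\var(\xi_{1,1})X_{k-1}.
\]
Since $\EE(M_k\mid\cF_{k-1})=0$, this quantity equals $\var(M_k\mid\cF_{k-1})$, and it also equals $\var(X_k\mid\cF_{k-1})$ because $X_k-M_k$ is $\cF_{k-1}$-measurable. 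Finally, (iv) follows by taking expectations in (iii), using $\EE(M_k)=0$ and the finiteness of $\EE(X_{k-1})$ from (ii).

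The only delicate step is the integrability and measurability bookkeeping needed to justify conditioning term by term. In particular, the conditional second moment in (iii) is well defined as a nonnegative quantity even before finiteness of $\EE(M_k^2)$ is known, and that finiteness is recovered afterwards from (iv).
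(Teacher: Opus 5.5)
Your proof is correct and follows essentially the same route as the paper. The paper derives (i) by conditioning on $\cF_{k-1}$ just before the lemma, iterates the recursion to get \eqref{X_alt}, and defers the remaining routine computations to Lemma A.1 of Barczy and Bezd\'any \cite{BarBez2}; your decomposition over $\{X_{k-1}=\ell\}$, with conditional monotone convergence and finiteness of $\EE(X_k)$ established by induction before $\EE(M_k\mid\cF_{k-1})=0$ is used, supplies those details (only the typo $aEX_{k-1}$ should read $a\EE(X_{k-1})$).
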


\section{Proof of Theorem \ref{main_3}}\label{Sec_Proof_Thm_main3}

Recall the notation $a_n = \EE(\xi_{1,1}^{(n)})$, $n\in\NN$.
Note that since $a_n\to1$ as $n\to\infty$, we may assume without loss of generality that $a_n>0$, $n\in\NN$.
Recall that, by \eqref{X_alt} and $X^{(n)}_0=0$, $n\in\NN$, we have
\begin{equation}\label{3_cr_X_decomp}
X_\nt^{(n)}=\sum_{j=0}^{\nt-1} a_n^{\nt-1-j}Y_j^{(n)}+\sum_{j=1}^\nt a_n^{\nt-j}M_{j}^{(n)},
 \qquad t\in\RR_+, \qquad n\in\NN.
\end{equation}
Let us consider the decomposition
\begin{align}
\begin{split}\label{X_decomp_3}
\sum_{j=0}^{\nt-1} a_n^{\nt-1-j}Y_j^{(n)}
	=&\sum_{j=0}^{\nt-1} a_n^{\nt-1-j}\EE\left(Y_j^{(n)}\right)
	+\sum_{j=0}^{\nt-1} a_n^{\nt-1-j}\left(Y_j^{(n)}-\EE\left(Y_j^{(n)}\right)\right)
\end{split}
\end{align}
 for all $t\in\RR_+$ and each $n\in\NN$.
By Lemma \ref{A_1} and $X_0^{(n)}=0$, $n\in\NN$,
 the first term on the right hand side of \eqref{X_decomp_3}
 is $\EE(X_\nt^{(n)})$ for all $t\in\RR_+$ and $n\in\NN$.
Thus, by Theorem \ref{exp_thm}, we have
\begin{align*}
\left(n^{-1}\sum_{j=0}^{\nt-1} a_n^{\nt-1-j}\EE\left(m_n^{-1}Y_j^{(n)}\right)\right)_{t\in\RR_+}
	&=\left((nm_n)^{-1}\EE(X_\nt^{(n)})\right)_{t\in\RR_+}\\
	&\lu\left(\int_0^t\ee^{\gamma(t-s)}\beta(s)\,\dd s\right)_{t\in\RR_+} \qquad \text{as $n\to\infty$.}
\end{align*}
Let $T\in\RR_{++}$ be fixed for the rest of this proof.
We check that
 \begin{align}\label{help_1}
 \EE\left(\sup_{t\in[0,T]}\left|(nm_n)^{-1}\sum_{j=0}^{\nt-1} a_n^{\nt-1-j}\left(Y_{j}^{(n)}
	-\EE\left(Y_{j}^{(n)}\right)\right)\right|\right)\to0 \qquad \text{as $n\to\infty$.}
 \end{align}
By the triangle inequality, and using that $\sum_{j=0}^{-1}:=0$, we have that
\begin{align*}
&\sup_{t\in[0,T]}\left|\sum_{j=0}^{\nt-1} a_n^{\nt-1-j}\left(Y_{j}^{(n)}
	-\EE\left(Y_{j}^{(n)}\right)\right)\right|
	=\sup_{t\in[n^{-1},T]}\left|a_n^{\nt-1}\sum_{j=0}^{\nt-1} a_n^{-j}\left(Y_{j}^{(n)}
	-\EE\left(Y_{j}^{(n)}\right)\right)\right|\\
&\qquad\qquad\qquad
	\leq \sup_{t\in[n^{-1},T]} a_n^{\nt-1} \sup_{t\in[n^{-1},T]}\sum_{j=0}^{\nt-1} a_n^{-j}\left|Y_{j}^{(n)}
	-\EE\left(Y_{j}^{(n)}\right)\right|\\
&\qquad\qquad\qquad
	\leq(1\vee a_n^\nT)\sum_{j=0}^{\nT-1} a_n^{-j}\left|Y_{j}^{(n)}
	-\EE\left(Y_{j}^{(n)}\right)\right|, \qquad n\in\NN,\;\; T> n^{-1}.
\end{align*}
Thus, using Jensen's inequality, we get that
\begin{align*}
&\EE\left(\sup_{t\in[0,T]}\left|(nm_n)^{-1}\sum_{j=0}^{\nt-1} a_n^{\nt-1-j}\left(Y_{j}^{(n)}
	-\EE\left(Y_{j}^{(n)}\right)\right)\right|\right)\\
&\qquad\qquad\qquad
	\leq n^{-1}(1\vee a_n^\nT)\sum_{j=0}^{\nT-1} a_n^{-j}\EE\left(\left|m_n^{-1}Y_{j}^{(n)}
	-\EE\left(m_n^{-1}Y_{j}^{(n)}\right)\right|\right)\\
&\qquad\qquad\qquad
	\leq n^{-1}(1\vee a_n^\nT)\sum_{j=0}^{\nT-1} \max_{j\in\{0,\dots,\nT-1\}}\left(a_n^{-j}\EE\left(\left|m_n^{-1}Y_{j}^{(n)}
	-\EE\left(m_n^{-1}Y_{j}^{(n)}\right)\right|\right)\right)
\end{align*}
\begin{align*}
&\qquad\qquad\qquad\qquad
	\leq\frac{\nT}{n}(1\vee a_n^\nT)\left(\max_{j\in\{0,\dots,\nT-1\}}a_n^{-j}\right)\sup_{t\in[0,T]}\EE\left(\left[\left|m_n^{-1}Y_{\nt}^{(n)}
	-\EE\left(m_n^{-1}Y_{\nt}^{(n)}\right)\right|\right]\right)\\
&\qquad\qquad\qquad\qquad
	\leq T(a_n^{-\nT}\vee a_n^\nT)\sup_{t\in[0,T]}
		\EE
			\left(\left|m_n^{-1}Y_{\nt}^{(n)}
			-\EE\left(m_n^{-1}Y_{\nt}^{(n)}\right)\right|\right)\to0 \qquad \text{as $n\to\infty$,}
\end{align*}
where we used assumption \eqref{Y_mom_zero} and the fact
 that $(a_n^{-\nT}\vee a_n^{\nT})\to\ee^{|\gamma|T}<\infty$ as $n\to\infty$.

By \eqref{X_decomp_3}, \eqref{help_1}, and Jacod and Shiryaev \cite[Chapter VI, Lemma 3.31]{JacShi},
 we have that
\[
\left((nm_n)^{-1}\sum_{j=0}^{\nt-1} a_n^{\nt-1-j}Y_j^{(n)}\right)_{t\in\RR_+}
	\distr\left(\int_0^t\ee^{\gamma(t-s)}\beta(s)\,\dd s\right)_{t\in\RR_+} \qquad \text{as $n\to\infty$.}
\]
Therefore, using again Jacod and Shiryaev \cite[Chapter VI, Lemma 3.31]{JacShi} and \eqref{3_cr_X_decomp},
 to prove \eqref{main_3_X_conv}, it is enough to show that
\[
(nm_n)^{-1}\sup_{t\in[0,T]}\left|\sum_{j=1}^\nt a_n^{\nt-j}M_{j}^{(n)}\right|\stoch0 \qquad \text{as $n\to\infty$}
\]
 which can be proved in the same way as (7.5) in
 the proof of Theorem 3.3 in Barczy and Bezd\'any \cite{BarBez2}
 (indeed, in the corresponding proof in \cite{BarBez2}, the hypothesis \ref{A3} is not used).

\section{Proof of Theorem \ref{RahSha_0}  using Theorem \ref{main_3} }\label{RahSha_proof}

Suppose that the assumptions of Theorem \ref{RahSha_0} hold.
Note that \textup{(\textbf{C4})} includes that $R_1(n)\to\infty$ as $n\to\infty$,
 however, in case of $\alpha_1\in\RR_{++}$, it holds automatically, see, e.g., Bingham et al.\ \cite[Proposition 1.5.1]{BinGolTeu}.

First, we check that the conditions of Theorem \ref{RahSha_0} imply that the conditions of Theorem \ref{main_3} are satisfied
 with $m_n:=R_1(n+1)$, $n\in\NN$, and $\beta(t):=t^{\alpha_1}$, $t\in\RR_+$.

The assumption that $\EE((\xi_{1,1}^{(n)})^2)<\infty$, $n\in\NN$,
 yields that our hypothesis \ref{A1} holds,
 and condition \textup{(\textbf{C3})} together with $\EE((\xi_{1,1}^{(n)})^2)<\infty$,
 $n\in\NN$, is the same as our hypothesis \ref{A4}, therefore condition \textup{(a)}
 of Theorem \ref{main_3} holds.
Note that condition \textup{(\textbf{C2})} is the same as our assumption \textup{(b)} of Theorem \ref{main_3}.

We have that the first statement of condition \textup{(\textbf{C1})} can be written as
\[
	\sup_{s\in[0,t]}\left|\EE(m_n^{-1}Y_\ns^{(n)})-m_n^{-1}R_1(\ns+1)\right|\to0 \qquad \text{as $n\to\infty$ for all $t\in\RR_+$.}
\]
Since $\alpha_1\in\RR_{++}$, by Corollary \ref{reg_tf_cor}, we have that
\[
	(m_n^{-1}R_1(\nt+1))_{t\in\RR_+}
		=\left(\frac{R_1(\nt+1)}{R_1(n+1)}\right)_{t\in\RR_+}
		\lu(\beta(t))_{t\in\RR_+} \qquad \text{as $n\to\infty$.}
\]
Thus, using that
\begin{align*}
\sup_{s\in[0,t]}\left|\EE(m_n^{-1}Y_\ns^{(n)})-\beta(s)\right|
	\leq&\sup_{s\in[0,t]}\left|\EE(m_n^{-1}Y_\ns^{(n)})-m_n^{-1}R_1(\ns+1)\right|\\
	&+\sup_{s\in[0,t]}\left|m_n^{-1}R_1(\ns+1)-\beta(s)\right|
\end{align*}
  for all $t\in\RR_+$ and each $n\in\NN$,
 condition \textup{(\textbf{C1})} implies that our condition \eqref{A2_beta_cont} holds
 with the function $\RR_+\ni t\mapsto t^{\alpha_1}=\beta(t)$.
Since $\beta$ is continuous, \eqref{A2_beta_cont} is equivalent to \eqref{A2_beta} by
 part \textup{(i)} of Lemma \ref{Jto_basic}, and thus, taking also into account that $\EE((Y^{(n)}_k)^2)<\infty$, $k\in\ZZ_+$, $n\in\NN$,
 condition \textup{(c)} of Theorem \ref{main_3} holds.

Similarly, the second statement of \textup{(\textbf{C1})} can be written as
\[
	\sup_{s\in[0,t]}\left|(R_2(n+1))^{-1}\var(Y_\ns^{(n)})-(R_2(n+1))^{-1}R_2(\ns+1)\right|\to0 \qquad \text{as $n\to\infty$  for all $t\in\RR_+$.}
\]
Since $\alpha_2\in\RR_{++}$, by Corollary \ref{reg_tf_cor}, we have that
\[
	\left(\frac{R_2(\nt+1)}{R_2(n+1)}\right)_{t\in\RR_+}\lu(t^{\alpha_2})_{t\in\RR_+} \qquad \text{as $n\to\infty$.}
\]
Thus, for all $T\in\RR_{++}$, we get that
\begin{align*}
\sup_{t\in[0,T]}\left|(R_2(n+1))^{-1}\var(Y_\nt^{(n)})-t^{\alpha_2}\right|
	\leq&\sup_{t\in[0,T]}\left|(R_2(n+1))^{-1}\var(Y_\nt^{(n)})-(R_2(n+1))^{-1}R_2(\nt+1)\right|\\
	&+\sup_{t\in[0,T]}\left|(R_2(n+1))^{-1}R_2(\nt+1)-t^{\alpha_2}\right|\to0
\end{align*}
as $n\to\infty$, that is,
\[
	\left((R_2(n+1))^{-1}\var(Y_\nt^{(n)})\right)_{t\in\RR_+}\lu(t^{\alpha_2})_{t\in\RR_+} \qquad \text{as $n\to\infty$.}
\]
Using that $\frac{R_2(n+1)}{m_n^2}=\frac{R_2(n+1)}{(R_1(n+1))^2}\to0$ as $n\to\infty$ (see condition \textup{(\textbf{C4})})
 and part \textup{(ii)} of Lemma \ref{Jto_basic}, we get that
\begin{equation}\label{var_0_helper}
	\left(\var(m_n^{-1}Y_\nt^{(n)})\right)_{t\in\RR_+}
		=	\left(\frac{R_2(n+1)}{m_n^2}(R_2(n+1))^{-1}\var(Y_\nt^{(n)})\right)_{t\in\RR_+}
		\lu(0)_{t\in\RR_+}
\end{equation}
as $n\to\infty$.
By the Jensen inequality, we have that
\[
\left(\EE\left[\left|m_n^{-1}Y_\nt^{(n)}-\EE\left(m_n^{-1}Y_\nt^{(n)}\right)\right|\right]\right)^2
	\leq \var(m_n^{-1}Y_\nt^{(n)}), \qquad t\in\RR_+, \qquad n\in\NN,
\]
and thus, by \eqref{var_0_helper}, we get that
\[
	\left(\left(\EE\left[\left|m_n^{-1}Y_\nt^{(n)}-\EE\left(m_n^{-1}Y_\nt^{(n)}\right)\right|\right]\right)^2\right)_{t\in\RR_+}
		\lu(0)_{t\in\RR_+} \qquad \text{as $n\to\infty$,}
\]
 which yields that condition \textup{(d)} of Theorem \ref{main_3} holds.
Thus, all assumptions of Theorem \ref{main_3} hold.

Finally, we show that the statements of Theorem \ref{RahSha_0} follow from Theorem \ref{main_3} as well.
Note that the conditions of Theorem \ref{main_3} imply that the conditions
 of Theorem \ref{exp_thm} hold.
Hence, by Theorem \ref{exp_thm},
we have that 
 \begin{equation}\label{help_5}
	\left(
	(nm_n)^{-1}\EE\left(
	X_\nt^{(n)}\right)\right)_{t\in\RR_+}
		\lu
	\left(
	\int_0^t\ee^{\gamma(t-s)}\beta(s)\,\dd s\right)_{t\in\RR_+}
	 \qquad \text{as $n\to\infty$.}
	\end{equation}
Note that \eqref{help_5} is nothing else but part (a) of Lemma 2 in Rahimov \cite{Rah1}, see also the first assertion of Lemma 2 in Rahimov and Sharipov \cite{RahSha}.
In particular, by \eqref{help_5}, we get
\begin{align}\label{help_4}
(nm_n)^{-1}\EE(X_n^{(n)})=(nm_n)^{-1}\EE(X_{\lfloor n\cdot 1\rfloor}^{(n)})\to\int_0^1\ee^{\gamma(1-s)}\beta(s)\,\dd s
	\qquad \text{as $n\to\infty$,}
\end{align}
 where the limit is positive (since $\beta(t)=t^{\alpha_1}$, $t\in\RR_+$),
 and thus $\EE(X_n^{(n)})>0$ for sufficiently large $n\in\NN$.
In what follows, we may assume without loss of generality that $\EE(X_n^{(n)})>0$, $n\in\NN$.
By Theorem \ref{main_3}, we have that
\begin{equation}\label{RahSha_conv}
\left((nm_n)^{-1}X_\nt^{(n)}\right)_{t\in\RR_+}
	\distr
	\left(\int_0^t\ee^{\gamma(t-s)}\beta(s)\,\dd s\right)_{t\in\RR_+} \qquad \text{as $n\to\infty$.}
\end{equation}
Next, we apply Lemma \ref{Conv2Funct} with $d=q=1$ and the following choices:
\begin{itemize}
\item
	$\Phi(f):=\left(\int_0^1\ee^{\gamma(1-s)}\beta(s)\,\dd s\right)^{-1}f$
		 for $f\in\DD(\RR_+,\RR)$,
\item
	$\Phi_n(f):=\left((nm_n)^{-1}\EE(X_n^{(n)})\right)^{-1}f$
		 for $f\in\DD(\RR_+,\RR)$ and $n\in\NN$,
\item
	$C:=\CC(\RR_+,\RR)\in\cB(\DD(\RR_+,\RR))$,
\item 
	$(\cU_t)_{t\in\RR_+}:=\left(\int_0^t\ee^{\gamma(t-s)}\beta(s)\,\dd s\right)_{t\in\RR_+}$,
\item 
	$(\cU_t^{(n)})_{t\in\RR_+}:=\left((nm_n)^{-1}X_\nt^{(n)}\right)_{t\in\RR_+}$, $n\in\NN$.
\end{itemize}
Since the mapping $\DD(\RR_+,\RR)\ni f\mapsto c^{-1}f\in\DD(\RR_+,\RR)$ is 
 continuous (in particular, Borel measurable) for all $c\in\RR_{++}$ 
  (following, e.g., from Lemma \ref{cont_cont}),
 we have that the mappings $\Phi$ and $\Phi_n$, $n\in\NN$, are Borel measurable.
By \eqref{help_4} and part \textup{(ii)} of Lemma \ref{Jto_basic}, the mappings $\Phi$ and $\Phi_n$, $n\in\NN$,
 satisfy the conditions of Lemma \ref{Conv2Funct} with the set $C$.
The stochastic processes $\cU^{(n)}$, $n\in\NN$, are $\RR_+$-valued and have c\`adl\`ag paths,
 and, by \eqref{RahSha_conv}, $\cU^{(n)}\distr\cU$ as $n\to\infty$ with $\PP(\cU\in C)=1$.
Thus, we can apply Lemma \ref{Conv2Funct} to obtain that
\[
  \left(\EE(X_n^{(n)})^{-1} X_{\nt}^{(n)}
   \right)_{t\in\RR_+}
   =  \left(
           \frac{(nm_n)^{-1}X_\nt^{(n)}}{(nm_n)^{-1}\EE(X_n^{(n)})} \\
   \right)_{t\in\RR_+}
  \distr 
   \left(
           \frac{\int_0^t\ee^{\gamma(t-s)}\beta(s)\,\dd s}{\int_0^1\ee^{\gamma(1-s)}\beta(s)\,\dd s}
   \right)_{t\in\RR_+} 
\]
 as $n\to\infty$, as desired.

\vspace*{5mm}

\appendix

\vspace*{5mm}

\noindent{\bf\Large Appendices}

\section{Locally uniform convergence and convergence in Skorokhod $J_1$ topology}
\label{GWII_moments_CMT}

In this appendix, we recall some results regarding locally uniform convergence,
 convergence in the Skorokhod $J_1$ topology, and a version of the continuous mapping theorem.

Recall that for $d\in\NN$, $\DD(\RR_+,\RR^d)$ and $\CC(\RR_+,\RR^d)$
 denote the set of $\RR^d$-valued c\`adl\`ag and continuous functions, respectively.
Let $\cB(\DD(\RR_+, \RR^d))$ denote the Borel $\sigma$-algebra on
 $\DD(\RR_+, \RR^d)$ for the metric defined in Jacod and Shiryaev
 \cite[Chapter VI, (1.26)]{JacShi} (see also Billingsley \cite[Chapter 3, Section 16]{Bil}).
With this metric $\DD(\RR_+, \RR^d)$ is a
 complete and separable metric space and the topology induced by this metric is
 the so-called Skorokhod $J_1$ topology.
Note that $\CC(\RR_+,\RR^d)\in\cB(\DD(\RR_+,\RR^d))$, see, e.g., Ethier and Kurtz \cite[Problem 3.11.25]{EthKur}.

First, we collect some simple facts about convergence in the Skorokhod $J_1$ topology
 and locally uniform convergence
 (for references, see Barczy and Bezd\'any \cite[Lemma B.1]{BarBez2}).

\begin{Lem}\label{Jto_basic}
Let $d,q\in\NN$, and let $f\in\DD(\RR_+,\RR^d)$, $f_n\in\DD(\RR_+,\RR^d)$, $n\in\NN$,
 and $g\in\DD(\RR_+,\RR^q)$, $g_n\in\DD(\RR_+,\RR^q)$, $n\in\NN$.
\begin{enumerate}[label=(\roman*)]
\item
	If $f$ is continuous, then $f_n\Jto f$ as $n\to\infty$ if and only if $f_n\lu f$ as $n\to\infty$.
\item
	Suppose that $f_n\lu f$ and $g_n\lu g$ as $n\to\infty$.
	Then $[f_n, g_n]^\top\lu[f,g]^{\top}$ as $n\to\infty$.
	Further, if $q=d$, then $f_n+g_n\lu f+g$ as $n\to\infty$,
	 and if $q=1$ and $g$ is continuous, then $f_n\cdot g_n\lu f\cdot g$ as $n\to\infty$.
\item
	Suppose that $f_n\Jtoo{d} f$ and $g_n\Jtoo{q} g$ as $n\to\infty$, and $g\in\CC(\RR_+,\RR^q)$.
	Then $[f_n, g_n]^\top\Jtoo{d+q}[f,g]^{\top}$ as $n\to\infty$.
	Further, if $q=d$, then $f_n+g_n\Jtoo{d}f+g$ as $n\to\infty$,
	 and if $q=1$, then $f_n\cdot g_n\Jtoo{d}f\cdot g$ as $n\to\infty$.
\end{enumerate}
\end{Lem}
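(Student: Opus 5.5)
For part (i) of Lemma~\ref{Jto_basic}, I will rely on the sequential characterization of $J_1$ convergence on $\DD(\RR_+,\RR^d)$ from Jacod and Shiryaev \cite[Chapter VI, Theorem 1.14]{JacShi}. It says that $f_n\Jto f$ if and only if there exist continuous, strictly increasing bijections $\lambda_n:\RR_+\to\RR_+$ with
\[
\sup_{s\in\RR_+}|\lambda_n(s)-s|\to0
\quad\text{and}\quad
\sup_{s\in[0,N]}\|f_n(\lambda_n(s))-f(s)\|\to0
\]
for all $N\in\NN$. All three parts reduce to this characterization combined with uniform continuity or boundedness on compacts.

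\emph{Part (i).} If $f_n\lu f$, take $\lambda_n(s):=s$; this gives $f_n\Jto f$, and continuity of $f$ is not needed. Conversely, suppose $f_n\Jto f$ with $f$ continuous. Fix $T$ and write
\[
\|f_n(t)-f(t)\|\leq \|f_n(t)-f(\lambda_n^{-1}(t))\|+\|f(\lambda_n^{-1}(t))-f(t)\|,
\qquad t\in[0,T].
\]
For large $n$ we have $\lambda_n^{-1}([0,T])\subseteq[0,T+1]$, so the first term is at most $\sup_{s\le T+1}\|f_n(\lambda_n(s))-f(s)\|\to0$. The second term tends to $0$ uniformly in $t\in[0,T]$, because $f$ is uniformly continuous on $[0,T+1]$ and $\sup_t|\lambda_n^{-1}(t)-t|=\sup_s|s-\lambda_n(s)|\to0$.

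\emph{Part (ii).} This is elementary. Joint convergence and convergence of sums follow from the triangle inequality and the fact that the Euclidean norm of a vector is at most the sum of the norms of its blocks. For the product, I will use
\[
\|f_ng_n-fg\|\leq\|f_n\|\,|g_n-g|+|g|\,\|f_n-f\|.
\]
A c\`adl\`ag function is bounded on compacts, so $\sup_{[0,T]}\|f_n\|\leq\sup_{[0,T]}\|f\|+1$ for large $n$ and $\sup_{[0,T]}|g|<\infty$. Hence the right-hand side tends to $0$ uniformly on $[0,T]$; continuity of $g$ is not actually needed here.

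\emph{Part (iii).} I will take the time changes $\lambda_n$ witnessing $f_n\Jto f$ and show that the same $\lambda_n$ work for $g_n$. By part (i), $g_n\lu g$. Then, on $[0,N]$,
\[
|g_n(\lambda_n(s))-g(s)|\leq|g_n(\lambda_n(s))-g(\lambda_n(s))|+|g(\lambda_n(s))-g(s)|.
\]
The first term is at most $\sup_{u\le N+1}|g_n(u)-g(u)|$ for large $n$. The second tends to $0$ by uniform continuity of $g$ on $[0,N+1]$. Hence $[f_n,g_n]^\top\circ\lambda_n\to[f,g]^\top$ uniformly on $[0,N]$, which gives $[f_n,g_n]^\top\Jtoo{d+q}[f,g]^\top$. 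For the sum and the product I observe that $(f_n+g_n)\circ\lambda_n=f_n\circ\lambda_n+g_n\circ\lambda_n$ and $(f_ng_n)\circ\lambda_n=(f_n\circ\lambda_n)(g_n\circ\lambda_n)$. Part (ii), applied to these composed functions, which converge locally uniformly, then gives the claims with the same $\lambda_n$.

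The only delicate point, and the main obstacle, is using the correct characterization of $J_1$ on the non-compact half-line $\RR_+$. One must check that a single sequence of time changes $\lambda_n$, independent of $N$, can be chosen, and handle the shift from $[0,T]$ to $[0,T+1]$ under $\lambda_n^{\pm1}$. I will take this directly from Jacod and Shiryaev's Theorem 1.14 rather than rederive it from the metric (1.26).
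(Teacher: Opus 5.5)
Your proof is correct. The paper gives no argument of its own for this lemma. It only points to Barczy and Bezd\'any \cite[Lemma B.1]{BarBez2}, that is, to standard facts about the Skorokhod space of the kind collected in Jacod and Shiryaev \cite[Chapter VI]{JacShi}: a continuous limit turns $J_1$ convergence into locally uniform convergence, and joint convergence holds when one of the limits is continuous.

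In such a citation-based route, the sum and product claims in (iii) come from the joint convergence through the continuity of $h\mapsto\varphi\circ h$ (Lemma \ref{cont_cont}), with $\varphi(x,y)=x+y$ or $\varphi(x,y)=yx$. You instead derive everything from the sequential characterization \cite[Chapter VI, Theorem 1.14]{JacShi}:
\begin{itemize}
\item in (i), you invert the time changes and use uniform continuity of $f$ on $[0,T+1]$;
\item in (iii), you show that the time changes $\lambda_n$ witnessing $f_n\Jto f$ also work for $g_n$;
\item you then get sums and products by applying (ii) to $f_n\circ\lambda_n$ and $g_n\circ\lambda_n$, with no continuous-mapping step.
\end{itemize}

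This gives a self-contained, elementary argument, and it exposes two sharpenings that a citation hides:
\begin{itemize}
\item the continuity of $g$ is not needed for the product statement in (ii), since local boundedness of the c\`adl\`ag limits suffices, as you note;
\item in (iii), one sequence $\lambda_n$ serves simultaneously for $f_n$, $g_n$, $[f_n,g_n]^\top$, $f_n+g_n$ and $f_n g_n$.
\end{itemize}
The citation route is shorter. It also avoids the bookkeeping of passing from $[0,N]$ to $[0,N+1]$ under $\lambda_n^{\pm 1}$, which you handle correctly using $\sup_{s}|\lambda_n(s)-s|\to0$.
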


Next, we recall a result concerning the composition of a c\`adl\`ag function
  with a continuous function (see, e.g., Ethier and Kurtz \cite[Problem 3.11.13]{EthKur}).

\begin{Lem}\label{cont_cont}
Let $d,q\in\NN$, and let $\varphi:\RR^d\to\RR^q$ be continuous.
Then the mapping
 $\DD(\RR_+,\RR^d)\ni f\mapsto \varphi\circ f=\left(\varphi(f(t))\right)_{t\in\RR_+}\in\DD(\RR_+,\RR^q)$
 is continuous.
\end{Lem}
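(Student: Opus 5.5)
The plan is to verify continuity sequentially, which suffices because $\DD(\RR_+,\RR^d)$ with the Skorokhod metric is a metric space. The main tool is the standard characterization of $J_1$ convergence on $\RR_+$ (e.g., Jacod and Shiryaev \cite[Chapter VI, Theorem 1.14]{JacShi}). It says that $f_n\Jtoo{d} f$ as $n\to\infty$ if and only if there exist continuous, strictly increasing bijections $\lambda_n:\RR_+\to\RR_+$ such that
\[
\sup_{t\in\RR_+}|\lambda_n(t)-t|\to0
\qquad\text{and}\qquad
\sup_{t\in[0,N]}\|f_n(\lambda_n(t))-f(t)\|\to0
\]
as $n\to\infty$, for every $N\in\NN$. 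The key observation is that the same time changes $\lambda_n$ witness the convergence of $\varphi\circ f_n$ to $\varphi\circ f$.

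\emph{Step 1: the map is well defined.} For $f\in\DD(\RR_+,\RR^d)$, right continuity of $\varphi\circ f$ and existence of its left limits follow directly from continuity of $\varphi$. Indeed, $\varphi(f(s))\to\varphi(f(t))$ as $s\downarrow t$, and $\varphi(f(s))\to\varphi(f(t-))$ as $s\uparrow t$. Hence $\varphi\circ f\in\DD(\RR_+,\RR^q)$.

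\emph{Step 2: a compactness bound.} Fix $N\in\NN$. A c\`adl\`ag function is bounded on compact intervals, so $K:=\overline{f([0,N])}$ is compact in $\RR^d$. Let $K_1:=\{x\in\RR^d:\operatorname{dist}(x,K)\leq1\}$, which is also compact. Since $\sup_{t\in[0,N]}\|f_n(\lambda_n(t))-f(t)\|\to0$, there is $n_0$ such that $f_n(\lambda_n(t))\in K_1$ for all $t\in[0,N]$ and $n\geq n_0$.

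\emph{Step 3: uniform continuity of $\varphi$ on $K_1$.} Let $\varepsilon>0$. Uniform continuity of $\varphi$ on the compact set $K_1$ gives $\delta\in(0,1)$ such that $\|\varphi(x)-\varphi(y)\|<\varepsilon$ whenever $x,y\in K_1$ and $\|x-y\|<\delta$. For all large $n$ we have $\sup_{t\in[0,N]}\|f_n(\lambda_n(t))-f(t)\|<\delta$, and therefore
\[
\sup_{t\in[0,N]}\|\varphi(f_n(\lambda_n(t)))-\varphi(f(t))\|\leq\varepsilon .
\]
Combined with $\sup_{t}|\lambda_n(t)-t|\to0$, the characterization yields $\varphi\circ f_n\Jtoo{q}\varphi\circ f$.

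I expect the main obstacle to be the bookkeeping in the characterization of $J_1$ convergence on the infinite horizon $\RR_+$, as opposed to on compact intervals $[0,N]$. On $\RR_+$ one only gets uniform closeness on each $[0,N]$, in some formulations only at continuity points $N$ of $f$, with the time changes possibly depending on $N$. If the time changes depend on $N$, I would run Steps 2 and 3 separately for each such $N$, using compact sets $K$ built from $f$ on $[0,N+1]$. Then I would invoke the corresponding ``if'' direction of the characterization, noting that continuity points of $f$ are also continuity points of $\varphi\circ f$. The remaining arguments are routine.
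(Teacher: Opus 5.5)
Your proof is correct. The paper gives no argument of its own for this lemma; it only cites Ethier and Kurtz \cite[Problem 3.11.13]{EthKur}. Your argument is the standard solution of that exercise: the same time changes $\lambda_n$ witness both $f_n\to f$ and $\varphi\circ f_n\to\varphi\circ f$, because $(\varphi\circ f_n)\circ\lambda_n=\varphi\circ(f_n\circ\lambda_n)$, and uniform continuity of $\varphi$ on a compact neighbourhood of $\overline{f([0,N])}$ turns uniform closeness of $f_n\circ\lambda_n$ and $f$ on $[0,N]$ into uniform closeness of their images under $\varphi$.

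The worry in your last paragraph is unnecessary with the characterization you quote. In Jacod and Shiryaev \cite[Chapter VI, Theorem 1.14]{JacShi}, a single sequence $\lambda_n$ works for every $N$ simultaneously. Running Steps 2--3 for each fixed $N$ with these same $\lambda_n$ therefore already completes the proof.
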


For $\RR^d$-valued stochastic processes $(\bcY_t)_{t \in \RR_+}$ and
 $(\bcY^{(n)}_t)_{t \in \RR_+}$, $n \in \NN$, with c\`adl\`ag paths, we write
 $\bcY^{(n)} \distr \bcY$ as $n\to\infty$ if the distribution of $\bcY^{(n)}$ on the
 space $(\DD(\RR_+,  \RR^d), \cB(\DD(\RR_+, \RR^d)))$ converges weakly to the
 distribution of $\bcY$ on the space
 $(\DD(\RR_+,  \RR^d), \cB(\DD(\RR_+, \RR^d)))$ as $n \to \infty$.
If $\xi$ and $\xi_n$, $n \in \NN$, are random elements with values in a metric space $(E,d)$,
 then we denote by $\xi_n \distr \xi$ as $n\to\infty$ the weak convergence of the
 distribution of $\xi_n$ on the space $(E, \cB(E))$ towards the
 distribution of $\xi$ on the space $(E, \cB(E))$ as $n \to \infty$,
 where $\cB(E)$ denotes the Borel $\sigma$-algebra on $E$ induced by
 the given metric $d$.

Finally, we formulate a version of
 the continuous mapping theorem, which easily follows from Kallenberg \cite[Theorem 3.27]{Kal}.
Given $d,q\in\NN$, for Borel measurable mappings $\Phi : \DD(\RR_+, \RR^d) \to \DD(\RR_+, \RR^q)$
 and $\Phi_n : \DD(\RR_+, \RR^d) \to \DD(\RR_+, \RR^q)$, $n \in \NN$,
 we will denote by $C_{\Phi, (\Phi_n)_{n \in \NN}}$ the set of all functions
 $f \in \CC(\RR_+, \RR^d)$ for which $\Phi_n(f_n) \lu \Phi(f)$ as $n\to\infty$ whenever
 $f_n \lu f$ as $n\to\infty$ with $f_n \in \DD(\RR_+, \RR^d)$, $n \in \NN$.

\begin{Lem}\label{Conv2Funct}
Let $d,\,q\in\NN$.
Let $\Phi : \DD(\RR_+, \RR^d) \to \DD(\RR_+, \RR^q)$ and
  $\Phi_n : \DD(\RR_+, \RR^d) \to \DD(\RR_+, \RR^q)$, $n \in \NN$, be Borel measurable mappings.
Let $(\bcU_t)_{t \in \RR_+}$ and $(\bcU^{(n)}_t)_{t \in \RR_+}$, $n \in \NN$,
 be $\RR^d$-valued stochastic processes with c\`adl\`ag paths such that
 $\bcU^{(n)} \distr \bcU$ as $n\to\infty$.
Suppose that there exists $C\in\cB(\DD(\RR_+,\RR^d))$
 such that $\PP(\bcU \in C)=1$ and $C\subset C_{\Phi,(\Phi_n)_{n\in\NN}}$.
Then $\Phi_n(\bcU^{(n)})\distr \Phi(\bcU)$ as $n\to\infty$.
\end{Lem}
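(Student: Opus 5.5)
The statement immediately above is Lemma \ref{Conv2Funct}, the version of the continuous mapping theorem. The plan is to reduce it to Kallenberg \cite[Theorem 3.27]{Kal}, the extended continuous mapping theorem. Recall its content: let $\xi_n\distr\xi$ in a metric space $S$, let $f_n,f:S\to S'$ be measurable, and let $C\subset S$ be measurable with $\PP(\xi\in C)=1$. If $f_n(s_n)\to f(s)$ in $S'$ whenever $s_n\to s\in C$, then $f_n(\xi_n)\distr f(\xi)$. Here we take $S=\DD(\RR_+,\RR^d)$ and $S'=\DD(\RR_+,\RR^q)$, both with the $J_1$ metric, together with $\xi_n=\bcU^{(n)}$, $\xi=\bcU$, $f_n=\Phi_n$ and $f=\Phi$. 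The measurability requirements on $\Phi_n$, $\Phi$ and $C$ are exactly the hypotheses of the lemma.

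The one thing to check is the convergence condition on $C$. Fix $f\in C$ and a sequence $f_n\Jtoo{d} f$ in $\DD(\RR_+,\RR^d)$. Since $C\subset C_{\Phi,(\Phi_n)_{n\in\NN}}\subset\CC(\RR_+,\RR^d)$, the limit $f$ is continuous. By part (i) of Lemma \ref{Jto_basic}, $f_n\Jtoo{d} f$ is then equivalent to $f_n\lu f$. The definition of $C_{\Phi,(\Phi_n)_{n\in\NN}}$ now gives $\Phi_n(f_n)\lu\Phi(f)$.

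It remains to pass from locally uniform convergence back to $J_1$ convergence in $\DD(\RR_+,\RR^q)$. Locally uniform convergence always implies $J_1$ convergence: take the identity time change in the Skorokhod metric, see Jacod and Shiryaev \cite[Chapter VI]{JacShi}. This direction needs no continuity of $\Phi(f)$. Hence $\Phi_n(f_n)\Jtoo{q}\Phi(f)$, and Kallenberg's theorem gives $\Phi_n(\bcU^{(n)})\distr\Phi(\bcU)$.

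The only delicate point is not mixing up the two topologies. The hypothesis on $C_{\Phi,(\Phi_n)_{n\in\NN}}$ is phrased with locally uniform convergence, whereas Kallenberg's theorem needs $J_1$ convergence on both sides. The input side is converted using the continuity of every $f\in C$ and part (i) of Lemma \ref{Jto_basic}. The output side uses only that $\lu$ is stronger than $\Jto$. No other difficulty is expected.
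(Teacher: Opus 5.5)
Your proposal is correct and takes the same route as the paper, which only remarks that the lemma easily follows from Kallenberg \cite[Theorem 3.27]{Kal}. Your argument fills in exactly the omitted verification: $J_1$ convergence to a limit in $C\subset\CC(\RR_+,\RR^d)$ is turned into locally uniform convergence via part (i) of Lemma \ref{Jto_basic}, and locally uniform convergence of $\Phi_n(f_n)$ is then used to get $J_1$ convergence, which needs no continuity of $\Phi(f)$.
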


\section{Regularly varying functions}\label{App_reg}

In this appendix, we present some simple auxiliary results  on regularly varying functions,
 and, for completeness, we provide proofs as well.
First, we recall the notions of slowly varying and regularly varying functions (see, e.g., Bingham et al.\ \cite{BinGolTeu}).

\begin{Def}
A measurable function $f: \RR_{++} \to \RR_{++}$ is called regularly varying at infinity with
index $\rho \in \RR$ if
\[
\lim_{x\to\infty} \frac{f(xt)}{f(x)} = t^\rho, \qquad t \in \RR_{++}.
\]
In case of $\rho = 0$, we call $f$ slowly varying at infinity.
\end{Def}

The following lemma is a simple corollary of the uniform convergence theorem
  (see, e.g., Bingham et al.\ \cite[Theorem 1.5.2]{BinGolTeu}),
 but we state and prove it for the sake of completeness.

\begin{Lem}\label{seq_reg}
Let $f:\RR_{++}\to\RR_{++}$ be a function regularly varying at infinity with index $\rho\in\RR$.
If $t\in\RR_{++}$ and $t_n,x_n\in\RR_{++}$, $n\in\NN$, are
 such that $t_n\to t$ as $n\to\infty$ and $x_n\to\infty$ as $n\to\infty$, then
\[
\lim_{n\to\infty}\frac{f(x_nt_n)}{f(x_n)}=t^\rho.
\]
\end{Lem}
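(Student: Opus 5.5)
The plan is to reduce the statement to the uniform convergence theorem for regularly varying functions (Bingham et al.\ \cite[Theorem 1.5.2]{BinGolTeu}). That theorem says that if $f$ is measurable and regularly varying at infinity with index $\rho$, then $f(x\lambda)/f(x)\to\lambda^\rho$ as $x\to\infty$, uniformly in $\lambda$ on each compact subset of $\RR_{++}$. Measurability of $f$ is part of the definition recalled above, so the theorem applies directly.

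First I fix the compact set $K:=[t/2,2t]\subset\RR_{++}$. Since $t_n\to t$ and $t\in\RR_{++}$, there is $n_0\in\NN$ such that $t_n\in K$ for all $n\geq n_0$. For such $n$, the triangle inequality gives
\[
\left|\frac{f(x_nt_n)}{f(x_n)}-t^\rho\right|
 \leq \sup_{\lambda\in K}\left|\frac{f(x_n\lambda)}{f(x_n)}-\lambda^\rho\right|
 + \left|t_n^\rho-t^\rho\right|.
\]

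Next I treat the two terms separately. Since $x_n\to\infty$, the first term tends to $0$ by the uniform convergence theorem applied along the sequence $x=x_n$. The second term tends to $0$ because $\lambda\mapsto\lambda^\rho$ is continuous on $\RR_{++}$ and $t_n\to t$. Together these give $f(x_nt_n)/f(x_n)\to t^\rho$, as claimed.

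There is no real obstacle once the uniform convergence theorem is invoked; all the nontrivial content lies in that theorem. The only point needing care is to restrict to $n\geq n_0$ so that $t_n$ lies in a compact subset of $\RR_{++}$ bounded away from $0$, since uniformity in the theorem fails near $\lambda=0$ in general. If one wanted to avoid citing the theorem, one would have to reprove it (via Egorov's theorem and a measure-theoretic argument on $\log f(\ee^u)$), which I would not do here.
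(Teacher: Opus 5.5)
Your proof is correct and is essentially the paper's argument: the same compact set $[t/2,2t]$, a triangle inequality splitting off $|t_n^\rho-t^\rho|$, and the uniform convergence theorem combined with continuity of $\lambda\mapsto\lambda^\rho$. The only difference is cosmetic. The paper writes $f(s)=s^\rho\ell(s)$ and applies the uniform convergence theorem to the slowly varying factor $\ell$, whereas you apply it to $f$ directly, which is legitimate since the cited Theorem 1.5.2 is stated for regularly varying $f$ with arbitrary index.
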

\noindent\textbf{Proof.}
Let $f:\RR_{++}\to\RR_{++}$ be a function regularly varying at infinity with index $\rho\in\RR$,
 and let $t\in\RR_{++}$ and $t_n,x_n\in\RR_{++}$, $n\in\NN$,
 be such that $x_n\to\infty$ as $n\to\infty$ and $t_n\to t$ as $n\to\infty$.
Since $f$ is regularly varying at infinity with index $\rho$,
 there exists a function $\ell:\RR_{++}\to\RR_{++}$ slowly varying at infinity
 such that $f(s)=s^\rho\ell(s)$, $s\in\RR_{++}$.
We have that
\[
\left|\frac{f(x_nt_n)}{f(x_n)}-t^\rho\right|
	=\left|t_n^\rho\frac{\ell(x_nt_n)}{\ell(x_n)}-t^\rho\right|
	\leq t_n^\rho\left|\frac{\ell(x_nt_n)}{\ell(x_n)}-1\right|
	+\left|t_n^\rho-t^\rho\right|, \qquad n\in\NN.
\]
The second term on the right hand side of the inequality tends to $0$
 as $n\to\infty$ due to the continuity of the power function on $\RR_{++}$.
Note that, since $t\in\RR_{++}$, $[t/2,2t]$ is a compact subset of $\RR_{++}$.
Since $t_n\to t$ as $n\to\infty$, we have that $t_n\in[t/2,2t]$ for large enough $n\in\NN$,
 and thus
\[
\lim_{n\to\infty}t_n^\rho\left|\frac{\ell(x_nt_n)}{\ell(x_n)}-1\right|
	\leq\lim_{n\to\infty}t_n^\rho\sup_{s\in[t/2,2t]}\left|\frac{\ell(x_n s)}{\ell(x_n)}-1\right|=t^\rho0=0,
\]
 where, since $x_n\to\infty$ as $n\to\infty$,
 the penultimate equality follows from the uniform convergence theorem
 (see, e.g., Bingham et al.\ \cite[Theorem 1.5.2]{BinGolTeu})
 and the fact that $t_n\to t$ as $n\to\infty$.
\proofend

\begin{Lem}\label{reg_tf}
Let $f:\RR_{++}\to\RR_{++}$ be a function
 regularly varying at infinity with index $\rho\in\RR$.
Then the function $g:\RR_+\to\RR_{++}$, $g(t):=f(\lfloor t\rfloor+1)$, $t\in\RR_+$,
 is bounded on $[0,T]$ for all $T\in\RR_{++}$, and
 the restriction of the function $g$ onto $\RR_{++}$ is regularly varying at infinity with index $\rho$.
\end{Lem}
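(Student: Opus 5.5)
We must prove Lemma \ref{reg_tf}: $g(t)=f(\lfloor t\rfloor+1)$ is bounded on $[0,T]$ for every $T\in\RR_{++}$, and $g|_{\RR_{++}}$ is regularly varying at infinity with index $\rho$.

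Boundedness is immediate. For $t\in[0,T]$ we have $\lfloor t\rfloor+1\in\{1,\dots,\lfloor T\rfloor+1\}$, so $g$ takes at most finitely many values there, each a positive real number. Hence $\sup_{t\in[0,T]}g(t)=\max_{k\in\{1,\dots,\lfloor T\rfloor+1\}}f(k)<\infty$.

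For regular variation, I would first note that $g$ is measurable, being a composition of the Borel function $t\mapsto\lfloor t\rfloor+1$ with $f$, and that it is $\RR_{++}$-valued. Next fix $t\in\RR_{++}$ and a sequence $x_n\to\infty$; it suffices to show $g(x_nt)/g(x_n)\to t^\rho$. I would write
\[
\frac{g(x_nt)}{g(x_n)}=\frac{f(\lfloor x_nt\rfloor+1)}{f(x_n)}\cdot\frac{f(x_n)}{f(\lfloor x_n\rfloor+1)}.
\]
For the first factor, set $t_n:=(\lfloor x_nt\rfloor+1)/x_n$. Since $x_nt<\lfloor x_nt\rfloor+1\leq x_nt+1$, we have $t_n\in(t,t+x_n^{-1}]$, so $t_n\in\RR_{++}$ and $t_n\to t$. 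Lemma \ref{seq_reg} then gives that the first factor tends to $t^\rho$.

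For the second factor, set $s_n:=(\lfloor x_n\rfloor+1)/x_n\to1$. Lemma \ref{seq_reg} gives $f(x_ns_n)/f(x_n)\to1^\rho=1$, so the reciprocal also tends to $1$. The product therefore converges to $t^\rho$.

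Since the sequence $x_n\to\infty$ was arbitrary, this yields the continuous-parameter limit $\lim_{x\to\infty}g(xt)/g(x)=t^\rho$ for every $t\in\RR_{++}$, which is regular variation with index $\rho$.

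The only point needing care is this reduction from sequences to $x\to\infty$, which holds by the standard sequential characterisation of limits. The rest is routine: Lemma \ref{seq_reg}, which rests on the uniform convergence theorem, does all the real work.
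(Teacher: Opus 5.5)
Your proof is correct and follows essentially the paper's route. Boundedness comes from the finitely many values $f(1),\dots,f(\lfloor T\rfloor+1)$, and regular variation comes from Lemma \ref{seq_reg} along an arbitrary sequence $x_n\to\infty$. The only difference is bookkeeping: the paper applies Lemma \ref{seq_reg} once, with base points $\lfloor x_n\rfloor+1$ and ratios $(\lfloor x_nt\rfloor+1)/(\lfloor x_n\rfloor+1)\to t$, whereas you pivot through $f(x_n)$ and apply it twice. You also verify measurability of $g$, which the paper leaves implicit.
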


\noindent \textbf{Proof.}
For all $T\in\RR_{++}$, we have that
\[
\sup_{t\in[0,T]}|g(t)|
	=\sup_{t\in[0,T]}|f(\lfloor t\rfloor+1)|
	=\max_{j\in\{1,\dots,\lfloor T\rfloor+1\}}|f(j)|<\infty,
\]
as desired.
Let $t\in\RR_{++}$ and $x_n\in\RR_{++}$, $n\in\NN$, be such that $x_n\to\infty$ as $n\to\infty$.
Then we have that
\[
\frac{g(x_nt)}{g(x_n)}
	=\frac{f(\lfloor x_nt\rfloor+1)}{f(\lfloor x_n\rfloor+1)}
	=\frac{f\left((\lfloor x_n\rfloor+1)\frac{\lfloor x_nt\rfloor+1}{\lfloor x_n\rfloor+1}\right)}{f(\lfloor x_n\rfloor+1)}
	\to t^{\rho} \qquad \text{as $n\to\infty$,}
\]
where the convergence follows from Lemma \ref{seq_reg}
 and the facts that $\lfloor x_n\rfloor+1\to\infty$ as $n\to\infty$
 and $\frac{\lfloor x_nt\rfloor+1}{\lfloor x_n\rfloor+1}\to t$ as $n\to\infty$.
\proofend

\begin{Cor}\label{reg_tf_cor}
Let $f:\RR_{++}\to\RR_{++}$ be a function
 regularly varying at infinity with index $\rho\in\RR_{++}$.
Then
\[
\left(\frac{f(\nt+1)}{f(n+1)}\right)_{t\in\RR_+}
	\lu\left(t^\rho\right)_{t\in\RR_+}
	\qquad \text{as $n\to\infty$.}
\]
\end{Cor}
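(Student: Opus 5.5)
We would reduce the statement to a uniform convergence property of the auxiliary function $g:\RR_+\to\RR_{++}$, $g(t):=f(\lfloor t\rfloor+1)$, from Lemma \ref{reg_tf}. For $t\in\RR_+$ and $n\in\NN$ we have $\lfloor nt\rfloor=\lfloor \lfloor nt\rfloor\rfloor$ and $g(n)=f(n+1)$, so
\[
\frac{f(\nt+1)}{f(n+1)}=\frac{g(nt)}{g(n)}, \qquad t\in\RR_+,\; n\in\NN .
\]
Hence it suffices to show that $\sup_{t\in[0,T]}\big|g(nt)/g(n)-t^\rho\big|\to0$ as $n\to\infty$ for every $T\in\RR_{++}$. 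By Lemma \ref{reg_tf}, the restriction of $g$ to $\RR_{++}$ is regularly varying at infinity with index $\rho$, and $g$ is bounded on every $[0,T]$.

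\textbf{The point $t=0$.} Here $g(0)/g(n)=f(1)/f(n+1)$. Since $\rho>0$, we have $f(x)\to\infty$ as $x\to\infty$ (Bingham et al.\ \cite[Proposition 1.5.1]{BinGolTeu}). So this quotient tends to $0=0^\rho$.

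\textbf{Uniformity on $(0,T]$.} The most direct route is the uniform convergence theorem for positive index (Bingham et al.\ \cite[Theorem 1.5.2]{BinGolTeu}). It states that if $\rho>0$ and the function is locally bounded on $[0,\infty)$, then $g(xt)/g(x)\to t^\rho$ uniformly in $t\in(0,T]$ as $x\to\infty$. The local boundedness is exactly what Lemma \ref{reg_tf} provides, and taking $x=n$ gives the claim.

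If one wants to avoid quoting that version of the theorem, we would split $[0,T]$ into $[0,\delta]$ and $[\delta,T]$.

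\emph{On $[\delta,T]$.} Argue by contradiction. If the supremum did not tend to $0$, there would exist $\varepsilon>0$, a subsequence $n_k$, and points $t_k\in[\delta,T]$ with $|g(n_kt_k)/g(n_k)-t_k^\rho|\geq\varepsilon$. Passing to a further subsequence, $t_k\to t\in[\delta,T]\subset\RR_{++}$. Then Lemma \ref{seq_reg}, applied to $g$ with $x_k=n_k$, gives $g(n_kt_k)/g(n_k)\to t^\rho$. Continuity of the power function gives $t_k^\rho\to t^\rho$, which is a contradiction.

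\emph{On $[0,\delta]$.} This is the step I expect to be the main obstacle. We need $\sup_{s\in[0,\delta]}g(ns)/g(n)\le C\delta^{\rho/2}$ for large $n$, since $t^\rho\le\delta^\rho$ there anyway.
\begin{itemize}
\item For $s$ with $ns\geq X$, where $X$ is a fixed large threshold, Potter's bound (Bingham et al.\ \cite[Theorem 1.5.6]{BinGolTeu}) with exponent $\rho/2$ gives $g(ns)/g(n)\le 2s^{\rho/2}\le2\delta^{\rho/2}$.
\item For $ns<X$, we use $\sup_{[0,X]}g<\infty$ together with $g(n)\to\infty$ to get $\sup_{ns<X}g(ns)/g(n)\to0$.
\end{itemize}

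\textbf{Conclusion.} Letting first $n\to\infty$ and then $\delta\downarrow0$ combines the two regions and gives the locally uniform convergence.
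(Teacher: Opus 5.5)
Your proposal is correct and follows essentially the same route as the paper: reduce to $g(t)=f(\lfloor t\rfloor+1)$ via Lemma \ref{reg_tf}, dispose of $t=0$ using $f(n)\to\infty$ (Bingham et al., Proposition 1.5.1), and apply the positive-index form of the uniform convergence theorem on $(0,T]$, using the local boundedness of $g$. Your optional $\delta$-splitting argument is also valid. It uses Lemma \ref{seq_reg} plus compactness on $[\delta,T]$, and Potter's bound plus local boundedness on $[0,\delta]$, so in effect it reproves that positive-index clause from the compact-set version of the theorem.
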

\noindent\textbf{Proof.}
First, note that, by Lemma \ref{reg_tf},
 the function $\RR_{+}\ni t\mapsto f(\lfloor t\rfloor+1)\in\RR_{++}$
 is bounded on $[0,T]$ for all $T\in\RR_{++}$,
 and its restriction onto $\RR_{++}$ is
 regularly varying at infinity with index $\rho$.
Let $T\in\RR_{++}$ be fixed. Then we have that
\begin{equation*}
\sup_{t\in[0,T]}\left|\frac{f(\nt+1)}{f(n+1)}-t^\rho\right|
	\leq\left|\frac{f(1)}{f(n+1)}\right|+\sup_{t\in(0,T]}\left|\frac{f(\nt+1)}{f(n+1)}-t^\rho\right|, \qquad n\in\NN.
\end{equation*}
Since $\rho\in\RR_{++}$, by Bingham et al.\ \cite[Proposition 1.5.1]{BinGolTeu},
 we have that $f(n)\to\infty$ as $n\to\infty$,
 and thus the first term on the right hand side of the above inequality tends to $0$ as $n\to\infty$.
Since $\rho\in\RR_{++}$ and the function $\RR_{++}\ni t\mapsto f(\lfloor t\rfloor+1)\in\RR_{++}$
 is bounded on $(0,T]$,
 the second term on the right hand side of the above inequality tends to $0$ as $n\to\infty$
 by the uniform convergence theorem (see, e.g., Bingham et al.\ \cite[Theorem 1.5.2]{BinGolTeu}).
\proofend

Finally, we point out that Corollary \ref{reg_tf_cor} does not hold in general when $\rho=0$.
\begin{Rem}\label{Rem_reg}
Let $f:\RR_{++}\to\RR_{++}$, $f(x):=\ln(x+1)$, $x\in\RR_{++}$, which is slowly varying at infinity.
Supposing that Corollary \ref{reg_tf_cor} holds
	in the case $\rho=0$ as well, we would have that
\begin{equation}\label{B_rem_conv}
	\left(\frac{f(\nt+1)}{f(n+1)}\right)_{t\in\RR_+}
	=\left(\frac{\ln(\nt+2)}{\ln(n+2)}\right)_{t\in\RR_+}\lu(t^0)_{t\in\RR_+}=(1)_{t\in\RR_+}
	 \qquad \text{as $n\to\infty$.}
\end{equation}
For each $n\in\NN$, we have that $n^{-1}\in(0,1]$, and hence
\[
\sup_{t\in[0,1]}\left|\frac{f(\nt+1)}{f(n+1)}-t^0\right|
	\geq \left|\frac{\ln(\lfloor n\cdot n^{-1}\rfloor+2)}{\ln(n+2)}-1\right|
	=\left|\frac{\ln(3)}{\ln(n+2)}-1\right|
	\to1 \qquad \text{as $n\to\infty$,}
\]
yielding that \eqref{B_rem_conv} cannot hold.
This shows that Corollary \ref{reg_tf_cor} does not hold in general when $\rho=0$.
\proofend
\end{Rem}

%\section*{Acknowledgements}

\section*{Statements and Declarations}

The authors declare that they have no known competing financial interests
or personal relationships that could have appeared to influence the work presented in this paper. 
Data sharing is not applicable to this article as no datasets were generated or analyzed during the current study.

\end{document}